\numberwithin{equation}{section}
\def\D{\mathcal{D}}
\def\ed{\mathrm{ed}}
\def\cro{\mathrm{cr}}
\def\c{\mathrm{c}}
\newtheorem{thm}{Theorem}
\newtheorem{prop}[thm]{Proposition}
\newtheorem{cor}[thm]{Corollary}
\newtheorem{defn}[thm]{Definition}
\newtheorem{lem}[thm]{Lemma}
\theoremstyle{remark}
\def\F{{\mathcal F}}
\def\la{\lambda}
\begin{document}
\title{A  curious $q$-analogue of Hermite polynomials}
\thanks{\today}
\author{Johann Cigler}
\address{Institut  f\"ur Mathematik, Universit\"at Wien \\
A-1090 Wien, Osterreich}
\email{johann.cigler@univie.ac.at}

\author{Jiang Zeng}
\address{Universit\'{e} de Lyon \\ Universit\'{e} Lyon 1 \\ Institut Camille Jordan \\ UMR 5208 du CNRS \\
43, boulevard du 11 novembre 1918 \\ F-69622 Villeurbanne Cedex, France}
\email{zeng@math.univ-lyon1.fr}

\subjclass[2000]{Primary 05A05, 05A15, 33C45; Secondary 05A10, 05A18, 34B24}

\keywords{Hermite polynomials, $q$-Hermite polynomials, Al-Salam-Chihara polynomials,
Hankel determinants, Catalan numbers, matchings, Touchard-Riordan}

\begin{abstract} Two well-known $q$-Hermite polynomials are the continuous and discrete $q$-Hermite polynomials.
In this paper we consider a new family of $q$-Hermite polynomials and prove several curious properties about these polynomials.
One striking property is the connection with $q$-Fibonacci and $q$-Lucas polynomials. The latter relation yields a generalization of 
the Touchard-Riordan formula.
\end{abstract}

\maketitle
\tableofcontents
\section{Introduction}
There are many ways to construct Hermite polynomials. 
For example, the normalized Hermite polynomials $H_n(x,s)=s^{n/2}H_n(x/\sqrt{s}, 1)$ ($n\geq 0$) may be defined by
the recurrence relation:
\begin{align}\label{eq:rec1}
H_{n+1}(x,s)=xH_n(x,s)-nsH_{n-1}(x,s),
\end{align}
with initial values $H_0(x,s)=1$ and $H_{-1}(x,s)=0$.
By induction, we have 
\begin{align}\label{eq:op}
H_n(x,s)=(x-s{\D})^n\cdot 1,
\end{align}
where ${\D}=\frac{d}{dx}$ denotes the differentiation operator. It follows that 
\begin{align}\label{eq:rec2}
{\D}\,H_n(x,s)=nH_{n-1}(x,s).
\end{align}
They have  the explicit formula (see \cite[Chapter 6]{AAR})  
$$
H_n(x,s)=\sum_{k=0}^n{n\choose 2k}(-s)^k(2k-1)!! x^{n-2k}.
$$
The first terms are
$$
1,\;x, \; -s+x^2,\; -3sx+x^3,\; 3s^2-6sx^2+x^4,\; 15s^2x-10sx^3+x^5.
$$
The Hermite polynomials are orthogonal with respect  to the the linear functional defined by the moments
$$
\mu_n=\frac{1}{\sqrt{2\pi}}\int_{-\infty}^{\infty} x^ne^{-x^2/2}dx=
\left\lbrace\begin{array}{cc}
(n-1)!!& \text{if $n$ is even},\\
0& \text{otherwise}.
\end{array}\right.
$$
In other words,  the $n$-th moment $\mu_n$ of the measure of  the Hermite polynomials is the number of the {\em complete matchings} 
on $[n]:=\{1,\ldots, n\}$, i.e., $\mu_{2n}=(2n-1)!!$ and $\mu_{2n+1}=0$.

Consider the rescaled Hermite polynomials $p_n(z,x,s)=H_n(z-x, -s)$ defined by
\begin{align}
p_{n+1}(z,x,s)=(z-x)p_n(z,x,s)+snp_{n-1}(z,x,s)
\end{align}
with initial values $p_0(z,x,s)=1$ and $p_{-1}(z,x,s)=0$.
Let $\mathcal{F}$ be the linear functional on the polynomials in $z$ defined by
$\mathcal{F}(p_n(z,x,s))=\delta_{n,0}$. Then, the moments
 $\mathcal{F}(z^n)$ are again the Hermite polynomials
\begin{align}
\mathcal{F}(z^n)=(\sqrt{-s})^n 
\sum_{k=0}^n {n\choose k}(x/\sqrt{-s})^{n-k}\mu_k=H_n(x,s).
\end{align}
This is equivalent to say that the generating function  of the  Hermite polynomials $H_n(x,s)$ has the  following
continued fraction expansion:
\begin{align}
H(z,x,s)=\sum_{n\geq 0} H_n(x,s)z^n=
\cfrac{1}{1-xz+\cfrac{sz^2}{1-xz+\cfrac{2sz^2}{1-xz+\cfrac{3sz^2}{\ddots}}}}.
\end{align}

Several $q$-Hermite polynomials were introduced in the literature (see \cite{KK, ISV}).
Two important classes of orthogonal $q$-analogues of $H_n(x,s)$ are 
the continuous and the discrete $q$-Hermite I polynomials,  which are special cases of the Al-Salam-Chihara polynomials. 

We first introduce some standard $q$-notations.   For $n\geq 1$  let
$$
 [n]:=[n]_q=\frac{1-q^n}{1-q}, \quad [n]_q!=\prod_{k=1}^n[k]_q,\quad [2n-1]_q!!=\prod_{k=1}^n[2k-1]_q,
$$
and $(a;q)_n=(1-a)(1-aq)\cdots (1-aq^{n-1})$ with $(a;q)_0=1$. The $q$-binomial coefficient is defined by
$$
{n\brack k}:={n\brack k}_q=\frac{(q;q)_n}{(q;q)_k(q;q)_{n-k}}
$$
for  $0\leq k\leq n$ and zero otherwise. 

 Recall \cite{IS} that 
the Al-Salam-Chihara polynomials $P_n(x;a,b,c)$ satisfy the three term recurrence:
\begin{align}\label{eq:ACR}
P_{n+1}(x; a,b,c)=(x-aq^n)P_n(x;a,b,c)-(c+bq^{n-1})[n]_qP_{n-1}(x;a,b,c)
\end{align}
with initial values $P_{-1}(x;a,b,c)=0$ and $P_{0}(x;a,b,c)=1$.

\begin{defn} Let $\F_{a,b,c}$ be the linear functional on the polynomials in $z$ which satisfy
\begin{align}\label{eq:moment}
\F_{a,b,c}(P_n(z;a,b,c))=\delta_{n,0}.
\end{align}
\end{defn}

Note that the continuous $q$-Hermite polynomials are
\begin{align}\label{eq:contherm}
\tilde{H}_n(x,s|q)=P_n(x;0,0,s)
\end{align}
and are also the moments  (see \cite{IS} and Proposition~16):
\begin{align}\label{eq:contherm2}
\tilde{H}_n(x,s|q)=\F_{x,-s,0}(z^n).
\end{align}
The discrete $q$-Hermite polynomials  I are
\begin{align}\label{eq:disch}
\tilde{h}_n(x,s;q)=P_n(x;0,(1-q)s,0).
\end{align}
The discrete $q$-Hermite polynomials  II  are
\begin{align}\label{eq:disch}
\tilde{h}_n(x;q)=(-i)^n \tilde{h}_n(ix,1;q^{-1}).
\end{align}
Besides, as we will see in Section~4,  the polynomials
\begin{align}\label{eq:H2}
h_n(x,s;q):=P_n(0; -x,0, s)
\end{align}
 are actually a rescaled version of $\tilde{h}_n(x;q)$.
The main purpose of this paper is to study   another  $q$-analogue of Hermite polynomials.
\begin{defn} The new $q$-Hermite polynomials are defined by
\begin{align}\label{eq:H1}
H_n(x,s|q):=\F_{x,0,-s}(z^n).
\end{align}
\end{defn}

The q-Hermite polynomials $\tilde H_n(x,s|q)$ have, 
amongst other facts, 

\begin{itemize}
\item[(1)] orthogonality with an explicit measure, 
\item[(2)] an explicit 3-term recurrence relation, 
\item[(3)]  explicit expressions,
\item[(4)]  a combinatorial model using matchings, 
\item[(5)]  are moments for other orthogonal polynomials, 
\item[(6)]  evaluable Hankel determinants because of (5), 
\item[(7)] Jacobi continued fractions as generating  functions because of (5). 
\end{itemize}

The new $q$-Hermite polynomials  $H_n(x,s|q)$ are not orthogonal,  i.e., they do not have (1) and (2). 
Instead they have  a nice $q$-analogue of  the operator formula \eqref{eq:op} for the ordinary Hermite polynomials (see Theorem~\ref{thm:main}), 
  the coefficients in  $H_n(x,s|q)$  appear in the inverse matrix of the coefficients in the continuous 
  $q$-Hermite polynomials (cf. Theorem~\ref{thm:HH}),  they have simple connection coefficients  with  
   $q$-Lucas  and $q$-Fibonacci polynomials  (cf. Theorem~\ref{thm:hf}). 
  The discrete q-Hermite polynomials $h_n(x,s;q)$ also have (1)-(4), and we will show in Theorem~\ref{thm4.4}
   that they are also moments. 
Moreover,  the quotients of two consecutive  polynomials $h_n(x,s;q)$ (see Eq.\eqref{eq:coeff}) appear as coefficients 
 in the expansion of the S-continued fraction of the generating function of
 $H_n(x, s|q)$'s, which  leads to a second proof of   Theorem~\ref{thm:main}.
 
 This paper is organized as follows:  we prove the main properties of $H_n(x,s|q)$ and $h_n(x,s;q)$  in Section~2 and  Section~3, respectively. In Section~4 we shall establish the connection of our new $q$-Hermite polynomials with the $q$-Fibonacci and $q$-Lucas polynomials. This yields,  in particular,  a generalization of Touchard-Riordan's formula for the moments of continuous $q$-Hermite polynomials (cf. Proposition~\ref{prop:mathieu}), first obtained by Josuat-Verg\`es~\cite{JV}. Finally,  in Section~5, we recall some well-known facts about the general theory of orthogonal polynomials and give another proof of the generalized Touchard-Riordan formula by using the orthogonality of continuous $q$-Hermite polynomials.
\section{The  $q$-Hermite polynomials  $H_n(x,s|q)$}
By  \eqref{eq:moment}  the  $q$-Hermite polynomials $H_n(x,s|q)$ are  the moments of the measure of the orthogonal polynomials
$P_n(z)$ satisfying the recurrence:
\begin{align}\label{eq:defP}
P_{n+1}(z)=(z-xq^n) P_n(z)+s[n]_qP_{n-1}(z).
\end{align}
Recall \cite[p.80]{KK} that 
the Al-Salam-Chihara polynomials $Q_n(x):=Q_n(x;\alpha, \beta)$ satisfy the three term recurrence:
\begin{align}\label{eq:AC}
Q_{n+1}(x)=(2x-(\alpha+\beta)q^n)Q_n(x)-(1-q^n)(1-\alpha\beta q^{n-1})Q_{n-1}(x),
\end{align}
with $Q_0(x)=1$ and $Q_{-1}(x)=0$. They have the following explicit formulas:
\begin{align}
Q_n(x;\alpha,\beta|q)
&=(\alpha e^{i\theta};q)_n e^{-i\theta}{}_2\phi_1
\left(\begin{array}{cl}
q^{-n},&\beta e^{-i\theta}\\
\quad \alpha^{-1} q^{-n+1}e^{-i\theta}&
\end{array}|q;\alpha^{-1} q e^{i\theta}\right),\label{eq:Q2}
\end{align}
where $x=\cos \theta$.

Comparing \eqref{eq:defP} and \eqref{eq:AC}  we have 
$P_n(z)=\frac{1}{(2a)^n}Q_n(az; \alpha, 0)$ with 
\begin{align}
a=\frac{1}{2}\sqrt{\frac{q-1}{s}}\quad\textrm{and}\quad 
\alpha=x\sqrt{\frac{q-1}{s}}.
\end{align}

Using the known formula for Al-Salam-Chihara polynomials we obtain
\begin{align}
P_n(z)&=\frac{1}{(2a\alpha)^n}\sum_{k=0}^n\frac{(q^{-n};q)_k}{(q;q)_k}q^k
\prod_{i=0}^{k-1}(1+\alpha^2q^{2i}-2q^i a\alpha z)\nonumber\\
&=\left(\frac{s}{x(q-1)}\right)^n
\sum_{k=0}^n\frac{(q^{-n};q)_k}{(q;q)_k}\left(\frac{-q}{s}\right)^k
\prod_{i=0}^{k-1}\left((q-1)q^ixz-s-(q-1)q^{2i}x^2\right).\label{rescal}
\end{align}

The first values of these polynomials are
\begin{align*}
 P_1(z)&=z-x,\\
P_2(z)&=z^2-x(1+q)z+(s+qx^2),\\
P_3(z)&=z^3-x[3]_q z^2+(2s+qs+q[3]_q x^2)z
-(s+qs+q^2s+q^3x^2)x.
\end{align*}

A matching $m$ of $\{1,2,\ldots, n\}$ is a set of pairs $(i,j)$ such that $i<j$ and $i,j\in [n]$. 
Each pair $(i,j)$ is called an edge of the matching. Let $\ed(m)$ be the number of edges of $m$, so $n-2\ed(m)$ is the number of unmatched vertices. Two edges $(i,j)$ and $(k,l)$ have a crossing if $i<k<j<l$ or $k<i<l<j$. Let 
$\cro(m)$ be the number of crossing numbers in the matching $m$. 
Using the combinatorial theory of Viennot~\cite{Vi},  Ismail and  Stanton \cite[Theorem 6]{IS} gave a combinatorial interpretation of the moments of Al-Salam-Chihara polynomials.  In particular  we derive   the  following result from  \cite[Theorem 6]{IS}.
\begin{lem} The moments  of the measure of  the orthogonal polynomials  $\{P_n(x)\}$ 
 are  the generating functions for all matchings $m$ of 
$[n]$:
\begin{align}\label{eq:combmoment}
\F_{x,0,-s}(z^n)=\sum_{m}x^{n-2\ed(m)}(-s)^{\ed(m)}
q^{\c(m)+\cro(m)},
\end{align}
where  
$c(m)=\sum_{\mathrm{a-vertices}}
|\{\mathrm{edges}\; i<j:\; i<a<j\}|$  and the sum extends over all matchings $m$ of $[n]$.
\end{lem}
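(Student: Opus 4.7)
The plan is to apply Viennot's combinatorial theory of continued fractions to the three-term recurrence \eqref{eq:defP}. By Viennot's theorem, for an orthogonality functional associated with polynomials satisfying $P_{n+1}(z)=(z-b_n)P_n(z)-\lambda_n P_{n-1}(z)$, the $n$-th moment equals the generating function of weighted Motzkin paths of length $n$, where a level step at height $h$ carries weight $b_h$ and a down step descending from height $h$ carries weight $\lambda_h$. Reading \eqref{eq:defP} gives $b_n=xq^n$ and $\lambda_n=-s[n]_q$, so
\[
\F_{x,0,-s}(z^n)=\sum_{\pi}\prod_{\text{level at }h}(xq^h)\prod_{\text{down from }h}(-s[h]_q),
\]
the sum being over Motzkin paths $\pi$ of length $n$.

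Next I would invoke the standard bijection between Motzkin paths of length $n$ and (partial) matchings of $[n]$: label position $i$ as a left endpoint (opener) if the $i$-th step is up, a right endpoint (closer) if it is down, and an unmatched vertex if it is level. The key observation is that the height just before the $i$-th step equals the number of currently open arcs, i.e.\ the number of edges $(j,k)$ with $j<i<k$ (taking $i$ itself as inside for the unmatched/closer case).

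I would then transform the weight. Each level step at position $a$ with height $h$ contributes a factor $xq^h$; since $h$ is exactly $|\{\text{edges }i<j:\ i<a<j\}|$, multiplying these contributions over all unmatched vertices yields $x^{n-2\ed(m)}q^{\c(m)}$. For each down step descending from height $h$, the factor $-s[h]_q=(-s)(1+q+\cdots+q^{h-1})$ is interpreted by choosing which of the $h$ currently open arcs is being closed: closing the $(k+1)$-st from the top creates exactly $k$ new crossings with the remaining open arcs, for $k=0,\dots,h-1$. Summing over all choices at every down step refines the Motzkin-path weight into a sum over matchings and contributes $(-s)^{\ed(m)}q^{\cro(m)}$. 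Multiplying the two contributions gives the stated formula.

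The main technical point to handle carefully is the bookkeeping in the third step, namely verifying that the local choice of which open arc to close produces precisely the crossing statistic $\cro(m)$ (and does not double-count with $\c(m)$, which tracks only unmatched vertices). This is the content of \cite[Theorem 6]{IS} specialized to the parameters $(a,b,c)=(x,0,-s)$ of \eqref{eq:ACR}, and once the dictionary between that recurrence and \eqref{eq:defP} is fixed, the lemma follows.
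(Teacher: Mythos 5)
Your proposal is correct and follows the same route as the paper: the paper simply derives the lemma as a specialization of Ismail--Stanton \cite[Theorem 6]{IS}, which is itself proved by exactly the Viennot/Flajolet Motzkin-path argument you spell out (level steps at height $h$ weighted $xq^h$ giving $q^{\c(m)}$, and the $[h]_q$ on a down step refined by the choice of which open arc to close, giving $q^{\cro(m)}$). The only difference is that you supply the bookkeeping the paper delegates to the citation, and your accounting of crossings (each crossing charged to the arc that closes first, so closing the arc with the $m$-th smallest opener among $h$ open arcs creates $h-m$ crossings) is accurate.
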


Let $M(n,k)$ be  the set of matchings of $\{1, \ldots, n\}$ with $k$ unmatched vertices. Then
\begin{align}\label{eq:cig11}
\F_{x,0,-s}(z^n)=\sum_{k}c(n,k,q)x^k(-s)^\frac{n-k}{2},
\end{align}
where
\begin{align}\label{eq:key}
c(n,k,q)=\sum_{m\in M(n,k)}
q^{c(m)+\cro(m)}.
\end{align}
It is easy to verify that
\begin{align}\label{eq:cig2}
c(n,k,q)=c(n-1,k-1,q)+[k+1]_q c(n-1,k+1,q)
\end{align}
with $c(0,k,q)=\delta_{k,0}$ and $c(n,0,q)=c(n-1,1,q)$.

Indeed, if $n$ is an unmatched vertex then for the restriction $m_0$ of $m$ to $[n-1]$ we get $c(m_0)=c(m)$
and $\cro(m_0)=\cro(m)$. If $n$ is matched with $m(n)$, such that there are $i$ unmatched vertices and $j$ endpoints of edges which cross the edge $(m(n),\;n)$ between $m(n)$ and $n$, then $c(m)=c(m_0)+i-j$ and 
$\cro(m)=\cro(m_0)+j$. Thus 
$c(m)+\cro(m)=c(m_0)+\cro(m_0)+i$. Since each $i$ with $0\leq i\leq k$ can occur we get \eqref{eq:cig2}.

Let now  ${\D}_q$ be the  $q$-derivative operator defined by
 $$
 {\D}_qf(z)=\frac{f(z)-f(qz)}{(1-q)z}.
 $$ 
 We have then the following $q$-analogue of \eqref{eq:op}.
\begin{thm}\label{thm:main} The $q$-Hermite polynomials $H_n(x,s|q)$, defined as 
moments $\F_{x,0,-s}(z^n)$, have the following operator formula:
\begin{align}\label{eq:qop}
H_n(x,s|q)=(x-s {\D}_q)^n1.
\end{align}
\end{thm}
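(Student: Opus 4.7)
The plan is to match $(x - s\mathcal{D}_q)^n\cdot 1$ to $H_n(x,s|q)$ by showing both satisfy the same recursion in the basis $\{x^k(-s)^{(n-k)/2}\}$. The combinatorial formula \eqref{eq:cig11} together with the recursion \eqref{eq:cig2} for the coefficients $c(n,k,q)$ already gives a full inductive characterization of $H_n(x,s|q)$, so it suffices to verify that the operator-defined polynomials obey the same recursion and initial values.

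Concretely, I would set $F_n(x,s) := (x - s\mathcal{D}_q)^n\cdot 1$ and expand it as
\begin{align*}
F_n(x,s) = \sum_{k\geq 0} a(n,k,q)\, x^k (-s)^{(n-k)/2},
\end{align*}
where only indices with $n-k$ even and nonnegative contribute, and $a(0,k,q)=\delta_{k,0}$. Using the basic identity $\mathcal{D}_q x^k = [k]_q x^{k-1}$, the recursion
\begin{align*}
F_{n+1}(x,s) = xF_n(x,s) - s\,\mathcal{D}_q F_n(x,s)
\end{align*}
contributes two pieces: the $x$ factor raises the exponent of $x$ by one, while $-s\mathcal{D}_q$ lowers the exponent by one and, crucially, absorbs a factor of $-s$ into the $(-s)^{(n-k)/2}$ part, producing $+[k]_q$ times $(-s)^{(n-k)/2+1}$. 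Reindexing both sums so that the monomial reads $x^j(-s)^{(n+1-j)/2}$ yields
\begin{align*}
a(n+1,j,q) = a(n,j-1,q) + [j+1]_q\, a(n,j+1,q),
\end{align*}
which is precisely \eqref{eq:cig2} (with the convention $a(n,-1,q)=0$). Since the initial data also match, induction gives $a(n,k,q)=c(n,k,q)$ for all $n,k$, and then \eqref{eq:cig11} identifies $F_n(x,s)$ with $H_n(x,s|q)$.

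The calculation itself is routine; the only subtlety worth pausing on is bookkeeping with parity, namely that $F_n$ has nonzero coefficients only on monomials $x^k(-s)^{(n-k)/2}$ with $n\equiv k\pmod 2$. This is preserved under the action of $x-s\mathcal{D}_q$, and it is what allows the second summand $-s\mathcal{D}_q F_n$ to be rewritten cleanly as a sum of terms of the form $[k]_q\, x^{k-1}(-s)^{(n-k+2)/2}$ with matching parity to the first summand. No nontrivial obstacle should arise: the main point is simply to observe that the operator $x - s\mathcal{D}_q$ acts on the basis $\{x^k(-s)^{(n-k)/2}\}$ in exactly the way the matching recursion \eqref{eq:cig2} prescribes.
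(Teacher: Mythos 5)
Your proof is correct and is essentially the paper's own argument: both rest on the combinatorial expansion \eqref{eq:cig11} together with the recursion \eqref{eq:cig2} for $c(n,k,q)$, and both verify that the operator $x-s\mathcal{D}_q$ implements exactly that recursion on the basis $x^k(-s)^{(n-k)/2}$ (the paper runs the computation from $H_n$ toward the operator identity, you run it from the operator side toward the coefficients, but this is the same calculation). No gap; the parity bookkeeping you flag is handled the same way in the paper.
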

\begin{proof}
We know that
\begin{align}\label{eq:cig1}
H_n(x,s|q)=\sum_{k}c(n,k,q)x^k(-s)^\frac{n-k}{2},
\end{align}
where $c(n,k,q)$ satisfies \eqref{eq:cig2}.   Therefore
\begin{align*}
H_n(x,s|q)&=\sum_kc(n-1,k-1,q)x^k(-s)^\frac{n-k}{2}+\sum_k[k+1]_qc(n-1,k+1,q)x^k(-s)^\frac{n-k}{2}\\
&=xH_{n-1}(x,s|q)-s{\D}_q H_{n-1}(x,s|q).
\end{align*}
The result follows then by induction on $n$.
\end{proof}

\noindent{\bf Remark.}  It should be noted that the method of  Varvak \cite{Va}  (see also \cite{JV}) can also be applied to prove Theorem~\ref{thm:main}. In fact her
 method proves first that $(x-sD_q)^n1$ is a generating function of some rook placements, which 
 is then shown to count  involutions  with respect to the statistic 
 ${\c(m)+\cro(m)}$ (see  \cite[Theorem 6.4]{Va}). We will give another proof of \eqref{eq:qop} by  using continued fraction, see 
the remark after Theorem~\ref{thm:Hh}. 

\medskip
The first terms  of the sequence $H_n(x,s|q)$ are 
\begin{align*}
&1,\; x,\; -s+x^2,\; x(-(2+q)s+x^2),\; (2+q)s^2-(3+2q+q^2)sx^2+x^4,
\\ 
&\qquad\qquad x((5+6q+3q^2+q^3)s^2-(4+3q+2q^2+q^3)sx^2+x^4),\ldots
\end{align*}

Let 
\begin{align}\label{eq:bcoeff}
\tilde H_n(x,s|q)=\sum_{k}b(n,k,q)x^k(-s)^\frac{n-k}{2}.
\end{align}
\begin{thm}\label{thm:HH} The matrices $(c(i,j,q))_{i,j=0}^{n-1}$ and $(b(i,j,q)(-1)^\frac{i-j}{2})_{i,j=0}^{n-1}$ are mutually inverse.
\end{thm}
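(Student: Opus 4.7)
The plan is to reformulate the claimed matrix identity as a polynomial identity in $x$ and then prove the latter by a one-step induction on $n$ using the three-term recurrence of the continuous $q$-Hermite polynomials.

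Writing $C\tilde{B}=I$ entry by entry amounts to showing
\[
\sum_{k}c(n,k,q)\,(-1)^{(k-j)/2}\,b(k,j,q)=\delta_{n,j}.
\]
I would multiply this relation by $x^{j}s^{(n-j)/2}$ and sum over $j$. The right-hand side becomes $x^{n}$, and after factoring $s^{(n-j)/2}=s^{(n-k)/2}s^{(k-j)/2}$ and using $(-1)^{(k-j)/2}s^{(k-j)/2}=(-s)^{(k-j)/2}$, the left-hand side collapses to $\sum_{k}c(n,k,q)\,s^{(n-k)/2}\,\tilde{H}_{k}(x,s|q)$ by the definition \eqref{eq:bcoeff}. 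Since the $\tilde{H}_{k}$'s are a polynomial basis, comparing coefficients of $x^{j}$ shows that the matrix identity is equivalent to the polynomial identity
\begin{equation}\label{eq:keyid}
x^{n}=\sum_{k}c(n,k,q)\,s^{(n-k)/2}\,\tilde{H}_{k}(x,s|q).
\end{equation}

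I would then prove \eqref{eq:keyid} by induction on $n$. The base case $n=0$ is immediate. For the inductive step, multiply the hypothesis for $n-1$ by $x$ and apply the three-term recurrence
\[
x\,\tilde{H}_{k}(x,s|q)=\tilde{H}_{k+1}(x,s|q)+s[k]_{q}\,\tilde{H}_{k-1}(x,s|q),
\]
which is \eqref{eq:ACR} with $(a,b,c)=(0,0,s)$. Shifting indices so that everything is expressed in the basis $\tilde{H}_{j}$, the coefficient of $\tilde{H}_{j}(x,s|q)$ becomes $s^{(n-j)/2}\bigl(c(n-1,j-1,q)+[j+1]_{q}c(n-1,j+1,q)\bigr)$, which equals $s^{(n-j)/2}c(n,j,q)$ by the recurrence \eqref{eq:cig2}. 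This is exactly what \eqref{eq:keyid} demands for $n$, closing the induction. Since $C$ and $\tilde{B}$ are finite square matrices, $C\tilde{B}=I$ is equivalent to $\tilde{B}C=I$, so the two matrices are mutually inverse.

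The hard part is really only the reformulation step: one has to track the $s$-powers carefully so that the correct $s^{(n-k)/2}$ factor emerges in \eqref{eq:keyid}. Once that is in place, the proof reduces to the observation that the recurrence \eqref{eq:cig2} for $c(n,k,q)$ and the three-term recurrence for $\tilde{H}_{k}$ are exactly dual in the sense needed, with no further combinatorial or analytic input required.
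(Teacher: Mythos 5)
Your proof is correct, but it follows a different route from the paper's. The paper's argument is an operator-calculus one: it first shows by induction that $\tilde H_n(x+s{\D}_q,s|q)1=x^n$ (using the three-term recurrence of $\tilde H_n$ and ${\D}_qx^n=[n]_qx^{n-1}$), and then expands $\tilde H_n(x+s{\D}_q,s|q)1$ via the coefficients $b(n,k,q)$ together with the operator formula of Theorem~\ref{thm:main}, $(x+s{\D}_q)^k1=\sum_j c(k,j,q)s^{(k-j)/2}x^j$; comparing the two expressions gives the inversion. You instead recast the matrix identity as the expansion $x^n=\sum_k c(n,k,q)s^{(n-k)/2}\tilde H_k(x,s|q)$ (this is exactly \eqref{eq:keylink}, which the paper itself declares ``clearly equivalent'' to the theorem in the appendix) and prove that expansion by induction, checking that the recurrence \eqref{eq:cig2} for $c(n,k,q)$ is precisely the Stieltjes-tableau recurrence dual to $x\tilde H_k=\tilde H_{k+1}+s[k]_q\tilde H_{k-1}$. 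Your bookkeeping of the $s$-powers and the reduction of $C\tilde B=I$ to $\tilde BC=I$ for finite square matrices are both sound, and the boundary case $k=0$ works because $c(n-1,-1,q)=0$ is consistent with $c(n,0,q)=c(n-1,1,q)$. What your version buys is independence from Theorem~\ref{thm:main}: you only need the combinatorially established recurrence \eqref{eq:cig2}, so the inversion theorem becomes self-contained. What the paper's version buys is that it exhibits the umbral mechanism ($x\mapsto x\pm s{\D}_q$) that organizes the whole paper, and it reuses Theorem~\ref{thm:main} rather than re-deriving its content through the tableau recurrence. The two proofs ultimately rest on the same pair of dual recurrences, applied from opposite ends.
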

\begin{proof}
We first show by induction that
\begin{align}\label{eq1.1}
\tilde H_n(x+s{\D}_q,s|q)1=x^n.
\end{align}
For this is obvious for $n=0$. If it is already shown for $n$ we get
\begin{align*}
\tilde H_{n+1}(x+s{\D}_q,s|q)1&=(x+s{\D}_q)\tilde H_n(x+s{\D}_q,s|q)1-s[n]_q\tilde H_{n-1}(x+s{\D}_q,s|q)1\\
&=(x+s{\D}_q)x^n-s[n]_qx^{n-1}=x^{n+1}.
\end{align*}
On the other hand we have
\begin{align}\label{eq1.2}
\tilde H_{n}(x+s{\D}_q,s|q)1&=\sum_{k=0}^nb(n,k,q)(-s)^\frac{n-k}{2}(x+s{\D}_q)^k1\nonumber\\
&=\sum_{k=0}^nb(n,k,q)(-s)^\frac{n-k}{2}\sum_{j=0}^kc(k,j,q)s^\frac{k-j}{2}x^j\nonumber\\
&=\sum_{j=0}^n s^\frac{n-j}{2}x^j\sum_{k=j}^n b(n,k,q)(-1)^\frac{n-k}{2}c(k,j,q).
\end{align}
The result follows then by comparing \eqref{eq1.1} and \eqref{eq1.2}.
\end{proof}

\noindent
{\bf Remark.}
If we set $q=0$ then \eqref{eq:cig2} reduces to the well-known Catalan triangle (see \cite[Chap. 7]{aigner}), which implies 
\begin{align*}
 c(2n,0,0)&=C_n=\frac{1}{n+1}{2n\choose n},\\
c(2n,2k,0)&=\frac{2k+1}{n+k+1}{2n\choose n-k}={2n\choose n-k}-{2n\choose n-k-1},\\
c(2n+1,2k+1,0)&=\frac{2k+2}{n+k+2}{2n+1\choose n-k}={2n+1\choose n-k}-{2n+1\choose n-k-1}.
\end{align*}




We shall use the standard $q$-notations in \cite{GR}.   
The recurrence \eqref{eq:defP} implies that the  Hankel determinants of $H_n(x,s|q)$ are
\begin{align}
\det(H_{i+j}(x,s|q))_{i,j}^{n-1}=(-s)^{n\choose 2}\prod_{j=0}^{n-1}[j]_q! 
\end{align}
and 
\begin{align}
\det(H_{i+j+1}(x,s;q))_{i,j}^{n-1}
=h_n(x,-s;q)(-s)^{n\choose 2}\prod_{j=0}^{n-1}[j]_q! ,
\end{align}
where
$$
h_n(x, -s;q)=(-1)^nP_n(0)=\left(\frac{s}{x(1-q)}\right)^n\sum_{k=0}^n\frac{(q^{-n};q)_k}{(q;q)_k}q^k
\prod_{i=0}^{k-1}(1+x^2(q-1)q^{2i}/s).
$$

\section{The rescaled discrete  $q$-Hermite polynomials  II}
By definition \eqref{eq:H2} and \eqref{eq:ACR} we have 
\begin{align}
h_{n+1}(x,s;q)=q^nx h_n(x,s;q)-[n]_q s h_{n-1}(x,s;q).
\end{align}
We derive then
\begin{align}
h_n(x,s;q)&=q^{n\choose 2}\sqrt{s^n} \tilde h_n\left(\frac{x}{\sqrt{s}};q\right)\\
&=\sum_{k=0}^nq^{n-2k\choose 2}{n\brack 2k}[2k-1]_q!! (-s)^k x^{n-2k},
\end{align}
where $\tilde h_n(x;q)$ are the discrete $q$-Hermite polynomials II, see \eqref{eq:H2}.

Since ${\D}_q(fg)={\D}_q(f)g+f(qx){\D}_q(g)$ and ${\D}_q(x)=1$, we see that
$$
{\D}_q(h_{n+1}(x))=q^nx{\D}_q(h_n(x))+q^nh_n(qx)-[n]_qs{\D}_q(h_{n-1}(x)).
$$
We derive by induction on $n$ that
\begin{align}\label{eq:qdif}
{\D}_q h_n(x,s;q)=[n]_q h_{n-1}(qx,s;q).
\end{align}

The first terms of these polynomials are
$$
1,\quad x,\quad qx^2-s,\quad q^3x^3-s[3]_qx,\quad 
q^6x^4-s(q^5+q^4+2q^3+q^2+q)x^2+s^2[3]_q. 
$$
The following result shows that  the polynomials $h_n(x,s;q)$ are moments of some orthogonal polynomials.
\begin{thm}\label{thm4.4}
The generating function of  $h_n(x,s;q)$  has the  continued fraction expansion:
 $$
 \sum_{m\geq 0}h_n(x,s;q)t^n=\cfrac{1}{ 1 -b_0 t-
\cfrac{\la_1 t^2}{1 -b_1 t-
 \cfrac{\la_2t^2}{1 - b_2 t-
 \cfrac{\la_3t^2}{1 -
 \ddots}}}},
 $$
 with
\begin{align}\label{eq:parameter}
b_n=q^{n-1}(q^n+q^{n+1}-1)x\quad \text{and}\quad \la_n=-q^{n-1}[n]_q(s+q^{2n-2}(1-q)x^2).
\end{align}
\end{thm}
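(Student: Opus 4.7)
By Favard's theorem, any three-term recurrence $R_{n+1}(z) = (z - b_n)R_n(z) - \lambda_n R_{n-1}(z)$ with initial values $R_{-1}=0$, $R_0=1$ defines a monic orthogonal polynomial sequence, and by the Jacobi continued fraction theorem (``Flajolet's fundamental lemma''), the moment generating function $\sum_n \mathcal{L}(z^n) t^n$, where $\mathcal{L}$ is normalized by $\mathcal{L}(R_n)=\delta_{n,0}$, is exactly the J-fraction displayed in the statement. Thus the theorem reduces to the moment identification $h_n(x,s;q) = \mathcal{L}(z^n)$ for the functional $\mathcal{L}$ determined by the claimed $b_n$ and $\lambda_n$.

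To carry this out, I would first translate the three-term recurrence $h_{n+1}(x,s;q) = xq^n h_n(x,s;q) - s[n]_q h_{n-1}(x,s;q)$ into a $q$-functional equation for $F(t) := \sum_{n\ge 0} h_n(x,s;q)\,t^n$. Multiplying by $t^{n+1}$, summing over $n \ge 1$, and using $\sum_n q^n h_n t^n = F(qt)$ together with $\sum_n [n+1]_q h_n t^n = (F(t) - qF(qt))/(1-q)$, one arrives at
\[
\Bigl(1 + \tfrac{st^2}{1-q}\Bigr)F(t) = 1 + \Bigl(xt + \tfrac{sqt^2}{1-q}\Bigr)F(qt).
\]
Let $F^{(k)}(t)$ denote the $k$-th tail of the claimed J-fraction, so that $F^{(0)}=F$ and $F^{(k)}(t) = 1/\bigl(1 - b_k t - \lambda_{k+1} t^2 F^{(k+1)}(t)\bigr)$. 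The plan is to propose an explicit ansatz for $F^{(k)}(t)$ as a rational combination of $F(q^{2k}t)$ and a shifted variant (the $q^{2n-2}$ factor inside $\lambda_n$ strongly suggests a $q^{2k}$-scaling of the argument, together with an $x$- and $s$-dependent rational correction), and then verify inductively via the functional equation that $F^{(k)}(t)(1 - b_k t) - 1 = \lambda_{k+1} t^2 F^{(k)}(t) F^{(k+1)}(t)$.

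The main obstacle is pinning down the correct ansatz for $F^{(k)}$: because the $q$-dependence of $b_n$ and $\lambda_n$ mixes $q^n$ with $q^{2n-2}$, the tails are not obtainable by any naive parameter shift of $F$, and one must allow a correction whose form has to be guessed and then justified. An alternative route that sidesteps this guesswork is to compute the Hankel determinants $D_n := \det\bigl(h_{i+j}(x,s;q)\bigr)_{0\le i,j \le n-1}$ and $D_n' := \det\bigl(h_{i+j+1}(x,s;q)\bigr)_{0\le i,j \le n-1}$ directly -- using the explicit summation for $h_n(x,s;q)$ and standard column operations, or the matching model implicit in the recurrence -- and to read off $b_n$ and $\lambda_n$ from the classical formulae $\lambda_1\cdots\lambda_n = D_{n+1}/D_n$ and analogues for the $b_k$'s. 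Since the paper's Section~2 already factors the Hankel determinants of $H_n(x,s|q)$ in terms of $h_n(x,-s;q)$, the corresponding calculation for $h_n$ itself can reasonably be expected to factor comparably cleanly and to yield the stated values of $b_n$ and $\lambda_n$ by a short computation.
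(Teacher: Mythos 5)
There is a genuine gap: what you have written is a plan whose decisive step is left open, and you acknowledge this yourself. The reduction of the theorem to the moment identification $h_n(x,s;q)=\mathcal{L}(z^n)$ is correct and is exactly the framework the paper sets up in its Appendix (the Stieltjes tableau \eqref{eq:stableau} and the continued fraction \eqref{eq:cfrac}), and your $q$-functional equation
\[
\Bigl(1+\tfrac{st^2}{1-q}\Bigr)F(t)=1+\Bigl(xt+\tfrac{sqt^2}{1-q}\Bigr)F(qt)
\]
does follow from the recurrence $h_{n+1}=xq^nh_n-s[n]_qh_{n-1}$. But the entire content of the proof lives in the step you defer: one must actually exhibit the tails $F^{(k)}$, or equivalently the tableau entries $a(n,k)$, and verify them. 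The paper's proof consists precisely of supplying this missing ingredient: it checks that
\[
a(n,k)={n\brack k}\,h_{n-k}(q^kx,s;q)
\]
satisfies \eqref{eq:stableau} with the stated $b_k$ and $\la_{k+1}$, using the three-term recurrence for $h_m(q^kx,s;q)$ together with the Pascal recurrences for ${n\brack k}$; the $q^{2k}$ appearing in $\la_{k+1}$ comes exactly from the rescaled argument $q^kx$ --- the ``shifted variant'' you suspected but did not pin down. Without that explicit formula (or an equivalent closed form for $F^{(k)}$) the induction cannot be run, so the first route is incomplete.

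The fallback via Hankel determinants does not rescue the argument. In the paper, the evaluations of $\det\bigl(h_{i+j}\bigr)$ and $\det\bigl(h_{i+j+1}\bigr)$ are \emph{consequences} of this theorem (read off from $\la_1\cdots\la_n$ and from $(-1)^np_n(0)$), not inputs to it; establishing them independently from the explicit summation for $h_n(x,s;q)$ is at least as hard as the theorem itself. Moreover, $D_{n+1}/D_n=\la_1\cdots\la_n$ only determines the $\la_k$; to recover the individual $b_k$ one needs the shifted determinants $D_n'$ in closed form and must then unwind the recursion $p_{n+1}(0)=-b_np_n(0)-\la_np_{n-1}(0)$, so no ``short computation'' is available by this route either.
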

\begin{proof}
To prove this it suffices to show that the Stieltjes tableau \eqref{eq:stableau} is satisfied with 
$$
a(n,k)={n\brack k}h_{n-k}(q^kx,s;q).
$$
This is easily verified.
\end{proof}

This implies that their Hankel determinants are
\begin{align}
\det(h_{i+j}(x,s;q))_{i,j}^{n-1}=(-1)^{n\choose 2}q^{n\choose 3}\prod_{j=0}^{n-1}\left([j]_q! (s+q^{2j}(1-q)x^2)^{n-1-j}\right)
\end{align}
and 
\begin{align}
\frac{\det(h_{i+j+1}(x,s;q))_{i,j}^{n-1}}
{\det(h_{i+j}(x,s;q))_{i,j}^{n-1}}
=w(n),
\end{align}
where  $w(n)$ satisfies
$$
w(n+1)=q^{n-1}(q^n+q^{n+1}-1)xw(n)+q^{n-1}[n]_q (s+q^{2n-2}(1-q)x^2)w(n-1).
$$
It is easily verified that
\begin{align}
 w(n)=\sum_{k=0}^nq^{2{n-k\choose 2}}{n\brack 2k}[2k-1]_q!! s^k x^{n-2k}
\end{align}
satisfies the same recurrence with the same initial values.


\begin{lem} Let $L_n(x):=h_{n}(x,(1-q)s;q)$. Then 
\begin{align}\label{eq:basic}
s L_n(x)+xL_{n+1}(x)=(x^2+s)L_n(qx).
\end{align}
\end{lem}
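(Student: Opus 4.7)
The plan is to induct on $n$, using the three-term recurrence for $L_n$. Substituting $s\mapsto (1-q)s$ in the recurrence at the start of Section~3 and using $[n]_q(1-q)=1-q^n$ yields
\begin{equation*}
L_{n+1}(x) \;=\; q^n x\, L_n(x) \;-\; (1-q^n)\, s\, L_{n-1}(x).
\end{equation*}
The base cases $n=0$ and $n=1$ are a direct check: with $L_0=1$, $L_1=x$, $L_2(x)=qx^2-(1-q)s$, both sides of \eqref{eq:basic} equal $x^2+s$ when $n=0$, and $qx(x^2+s)$ when $n=1$.

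For the inductive step at $n\ge 2$, I would apply the $L$-recurrence with $x$ replaced by $qx$ to get
\begin{equation*}
L_n(qx) \;=\; q^n x\, L_{n-1}(qx) \;-\; (1-q^{n-1})\, s\, L_{n-2}(qx),
\end{equation*}
multiply through by $x^2+s$, and then invoke the inductive hypotheses at $n-1$ and $n-2$ in the form $(x^2+s)L_{n-1}(qx)=sL_{n-1}(x)+xL_n(x)$ and $(x^2+s)L_{n-2}(qx)=sL_{n-2}(x)+xL_{n-1}(x)$. On the left-hand side of \eqref{eq:basic}, expanding $xL_{n+1}(x)$ via the $L$-recurrence gives $sL_n(x)+xL_{n+1}(x)=(s+q^n x^2)L_n(x)-(1-q^n)sx\,L_{n-1}(x)$. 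Matching the two expressions, the $q^n x^2 L_n(x)$ terms cancel and the identity reduces to
\begin{equation*}
\bigl(q^n+q^{n-1}-1\bigr)xs\,L_{n-1}(x)-(1-q^{n-1})s^2 L_{n-2}(x) \;=\; sL_n(x)-(1-q^n)sx\,L_{n-1}(x).
\end{equation*}
The crucial simplification $(q^n+q^{n-1}-1)+(1-q^n)=q^{n-1}$ collects the $xs\,L_{n-1}(x)$ coefficients, and after dividing by $s$ we are left with precisely $L_n(x)=q^{n-1}x L_{n-1}(x)-(1-q^{n-1})s L_{n-2}(x)$, i.e., the $L$-recurrence at index $n-1$, which closes the induction.

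The whole argument is elementary; the only mild obstacle is bookkeeping three instances of the recurrence simultaneously (for $L_{n+1}(x)$, $L_n(qx)$, and the two hypotheses) and spotting the cancellation $(q^n+q^{n-1}-1)+(1-q^n)=q^{n-1}$ that identifies the remainder with a known recurrence. A more computational alternative would substitute the explicit formula for $h_n$ displayed in Section~3 and reduce \eqref{eq:basic} to a $q$-Pascal identity for $(1-q^{2k-1}){n+1\brack 2k}_q$; the inductive route above avoids this and uses only the three-term recurrence.
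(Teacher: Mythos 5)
Your proof is correct, and it takes a different route from the one in the paper. You run a two-step induction driven entirely by the three-term recurrence $L_{m+1}(y)=q^m y\,L_m(y)-(1-q^m)s\,L_{m-1}(y)$: applying it at $qx$ to expand $(x^2+s)L_n(qx)$, invoking the inductive hypotheses at $n-1$ and $n-2$, and expanding $xL_{n+1}(x)$ on the other side, after which the cancellation $(q^n+q^{n-1}-1)+(1-q^n)=q^{n-1}$ identifies the residue with the recurrence at index $n-1$. I checked the algebra and the base cases $L_2(x)=qx^2-(1-q)s$, $n=0,1$; everything is sound, and since the step uses two predecessors, your two base cases make the induction well-founded. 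The paper instead observes that both sides have the same constant term $sL_n(0)$, so it suffices to match $q$-derivatives; applying ${\D}_q$ together with the identity ${\D}_q h_n(x,s;q)=[n]_q h_{n-1}(qx,s;q)$ and then substituting $x\mapsto x/q$ reduces the claim at $n$ to the claim at $n-1$, giving a one-step induction. The paper's route is shorter because the $q$-derivative formula \eqref{eq:qdif} does the descent in a single stroke, but it relies on that auxiliary identity and on the (implicit) principle that a polynomial is determined by its ${\D}_q$-image and constant term; your argument is more elementary in that it uses nothing beyond the recurrence itself, at the cost of tracking three instances of it simultaneously.
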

\begin{proof} First we note that the constant terms of both sides of \eqref{eq:basic} are equal to $sL_n(0)$. So it suffices to 
show that the derivatives of the two sides are equal.  
Applying ${\D}_q$ to \eqref{eq:basic} and using \eqref{eq:qdif}  we obtain, after replacing $x$ by $x/q$,
$$
s[n]L_{n-1}(x)+xq[n-1]L_n(x)+L_{n+1}(x)=(x^2+s)q[n]L_{n-1}(qx).
$$
Since $L_{n+1}(x)=q^nxL_n(x)-(1-q^n)sL_{n-1}(x)$, we can rewrite the above equation as follows:
\begin{align}
sL_{n-1}(x)+xL_n(x)=(x^2+s)L_{n-1}(qx).
\end{align}
The proof is thus completed by induction on $n$.
\end{proof}

We shall prove the following Jacobi  continued fraction expansion for the 
generating function of $(x+(1-q)s{\D}_q)^n\cdot 1$. This is equivalent to Theorem~\ref{thm:main}.
\begin{thm} \label{thm:Hh} Let $T_n(x,s)=(x+(1-q)s{\D}_q)^n\cdot 1$. Then
\begin{align}
\sum_{n\geq 0} T_n(x,s) t^n=
\cfrac{1}{1-b_0 t-
\cfrac{\lambda_1 t^2}{1-b_1 t-
\cfrac{\lambda_2 t^2}{1-\ddots}}},
\end{align}
where the coefficients are 
\begin{equation}\label{eq:coeff} 
b_n=q^n x,\quad \text{for $n\geq 0$};\quad\text{and}\quad  
\lambda_n=(1-q^n) s,\quad \text{for $n\geq 1$}.
\end{equation}
\end{thm}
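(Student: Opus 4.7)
The plan is to deduce Theorem~\ref{thm:Hh} from Theorem~\ref{thm:main} by combining a simple change of parameter with the standard bijection between three-term recurrences for orthogonal polynomials and J-fractions for their moment generating functions.

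First I would observe that if one replaces $s$ by $-(1-q)s$ in the operator formula \eqref{eq:qop}, then
\[
T_n(x,s) = (x+(1-q)sD_q)^n\cdot 1 = H_n\bigl(x,\,-(1-q)s\,\big|\,q\bigr),
\]
so $T_n(x,s)$ is the $n$-th moment of the linear functional $\F_{x,0,(1-q)s}$ associated, via \eqref{eq:defP}, with the monic orthogonal polynomials
\[
P_{n+1}(z) = (z-xq^n)P_n(z) - (1-q)s\,[n]_q\,P_{n-1}(z).
\]
Since $(1-q)[n]_q = 1-q^n$, this recurrence is exactly
\[
P_{n+1}(z) = (z-q^n x)P_n(z) - (1-q^n)s\,P_{n-1}(z),
\]
which identifies the recurrence coefficients as $b_n = q^n x$ and $\lambda_n = (1-q^n)s$, matching \eqref{eq:coeff}.

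Next I would invoke the general Stieltjes–Favard theorem: if $\{P_n(z)\}$ is a monic sequence satisfying $P_{n+1}(z)=(z-b_n)P_n(z)-\lambda_n P_{n-1}(z)$ and $\mu_n$ is the associated moment sequence (normalized by $\mu_0=1$), then
\[
\sum_{n\ge 0}\mu_n t^n = \cfrac{1}{1-b_0 t - \cfrac{\lambda_1 t^2}{1-b_1 t-\cfrac{\lambda_2 t^2}{1-\ddots}}}.
\]
Applied to the sequence above, this immediately yields the claimed J-fraction.

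There is essentially no obstacle here; the only thing to be careful about is the sign bookkeeping under the substitution $s\mapsto -(1-q)s$ and the factor $(1-q)[n]_q = 1-q^n$ that converts the ``unnormalized'' form of the recurrence into the clean form $\lambda_n = (1-q^n)s$. For completeness I would mention that the Stieltjes–Favard correspondence cited above is the very tool recalled in Section~5; alternatively one can verify the equivalence directly by expanding the finite truncations of the continued fraction and matching them against the recursion $T_{n+1}=(x+(1-q)sD_q)T_n$, but the moment-theoretic route is shorter and makes the equivalence with Theorem~\ref{thm:main} transparent.
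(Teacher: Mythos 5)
Your proof is correct, but it takes a genuinely different route from the paper's. You derive Theorem~\ref{thm:Hh} \emph{from} Theorem~\ref{thm:main}: the substitution $s\mapsto -(1-q)s$ turns $(x-sD_q)^n\cdot 1$ into $T_n(x,s)$, Theorem~\ref{thm:main} identifies $T_n(x,s)=\F_{x,0,(1-q)s}(z^n)$, the Al-Salam--Chihara recurrence \eqref{eq:ACR} with $(a,b,c)=(x,0,(1-q)s)$ gives $b_n=q^nx$ and $\lambda_n=(1-q)[n]_qs=(1-q^n)s$, and the standard moment/J-fraction correspondence (recalled as \eqref{eq:cfrac} in the appendix) finishes the argument; every step checks out. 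The paper does something quite different and deliberately independent of Theorem~\ref{thm:main}: it derives the $q$-functional equation $\bigl(1-\tfrac{x^2+s}{x}t\bigr)G(x,t)=1-\tfrac{s}{x}tG(qx,t)$ directly from the operator definition of $T_n$, solves it as a Stieltjes fraction via Wall's formula, identifies the partial quotients $c_{2n}, c_{2n+1}$ explicitly as ratios of the rescaled discrete $q$-Hermite II polynomials $L_n(x)=h_n(x,(1-q)s;q)$ (using the key identity $sL_n(x)+xL_{n+1}(x)=(x^2+s)L_n(qx)$), and then contracts to the J-fraction. The reason for this longer route is stated in the remarks surrounding Theorems~\ref{thm:main} and~\ref{thm:Hh}: the authors want Theorem~\ref{thm:Hh} to furnish a \emph{second, independent} proof of the operator formula \eqref{eq:qop}, and they also want the explicit appearance of the quotients $L_{n+1}/L_n$ in the S-fraction, which is advertised in the introduction as a feature in its own right. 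Your argument is shorter and perfectly valid as a proof of the literal statement (there is no circularity, since Theorem~\ref{thm:main} is proved combinatorially in Section~2), but it cannot replace the paper's proof without sacrificing both the independence claim and the connection to $h_n(x,(1-q)s;q)$.
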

\begin{proof}
Since $T_n(x,s)=(x+(1-q)s{\D}_q) T_{n-1}(x,s)$, we have
$$
T_n(x,s)=(x+\frac{s}{x})T_{n-1}(x,s)-\frac{s}{x}T_{n-1}(qx,s).
$$
Equivalently the generating function
$G(x,t)=\sum_{n\geq 0}T_n(x,s)t^n$ satisfies
the functional equation:
\begin{equation}\label{func-eq}
\left(1-\frac{x^2+s}{x}t\right)G(x,t)=1-\frac{s}{x}tG(qx,t).
\end{equation}
Suppose that
\begin{align}\label{eq:cfstieltjes}
G(x,t)=\cfrac{1}{ 1 -
\cfrac{c_1t}{1 -
 \cfrac{c_2t}{1 - 
 \cfrac{c_3t}{1 -
 \ddots}}}},
 \end{align}
where  $c_n=(g_n-1)g_{n-1}A$ with $A:=A(x)=-\frac{x^2+s}{x}$ and $g_i:=g_i(x)$. 

Substituting \eqref{eq:cfstieltjes} in \eqref{func-eq}  and then replacing $t$ by $t/A$  we obtain
\begin{align}
{1+t\over \displaystyle 1 -{(g_1-1)t\over
\displaystyle 1 - {(g_2-1)g_1t\over
\displaystyle 1 - {(g_3-1)g_2t\over
\displaystyle 1 - {(g_4-1)g_3t\over
\displaystyle 1 - \ldots}}}}}
=1 +{\frac{s}{x^2+s}t\over
\displaystyle 1 - {(g_1'-1)\frac{A'}{A}t\over
\displaystyle 1 - {(g_2'-1)g_1'\frac{A'}{A}t\over
\displaystyle 1 - {(g_3'-1)g_2'\frac{A'}{A}t\over
\displaystyle 1 - \ldots}}}},
\end{align}
where $A':=A(qx)$ and $g_i':=g_i(qx)$.
Comparing this with  Wall's formula (see \cite{KZ}):
\begin{align}
{1+z\over \displaystyle 1 -{(g_1-1)z\over
\displaystyle 1 - {(g_2-1)g_1z\over
\displaystyle 1 - {(g_3-1)g_2z\over
\displaystyle 1 - {(g_4-1)g_3z\over
\displaystyle 1 - \ldots}}}}}
=1 +{g_1z\over
\displaystyle 1 - {(g_1-1)g_2z\over
\displaystyle 1 - {(g_2-1)g_3z\over
\displaystyle 1 - {(g_3-1)g_4z\over
\displaystyle 1 - \ldots}}}},
\end{align}
we derive that $g_0=1$ and for $n\geq 1$, 
\begin{equation}\label{eq1}\left\{\begin{split}
g_{2n}&=\frac{A'}{A}\,\frac{g_{2n-1}'-1}{g_{2n-1}-1}\,g_{2n-2}',\\
g_{2n+1}&=\frac{A'}{A}\,\frac{g_{2n}'-1}{g_{2n}-1}\,g_{2n-1}'.
\end{split}\right.
\end{equation}

For example,
\begin{align*}
g_1&=\frac{s}{x^2+s},\quad 
&g_3&=\frac{A'}{A}\frac{g_2'-1}{g_2-1}g_1'=\frac{s}{x^2+s}\frac{1}{q},\\
g_2&=\frac{A'}{A}\frac{g_1'-1}{g_1-1}=q,\quad
&g_4&=\frac{A'}{A}\frac{g_3'-1}{g_3-1}g_2'={\frac {-s+qs+{q}^{3}{x}^{2}}{-s+qs+q{x}^{2}}}.
\end{align*}
In general we have the following result.
\begin{equation}\label{keystep}\left\{\begin{split}
g_{2n}&=\frac{sL_n(x)+xL_{n+1}(x)}{(x^2+s)L_{n}(x)},\\
g_{2n+1}&=\frac{s L_{n}(x)} {sL_{n}(x)+xL_{n+1}(x)}.
\end{split}\right.\qquad (n\geq 0).
\end{equation}
This can be verified by induction on $n$.  Suppose that the formula~\eqref{keystep}  is true for $n\geq 0$. 
We prove that the formula holds for $n+1$. By \eqref{eq1} we have
\begin{align*}
g_{2n+2}=\frac{A'}{A}\,\frac{g_{2n+1}'-1}{g_{2n+1}-1}\,g_{2n}'
=\frac{s L_n(x)+xL_{n+1}(x)}{(x^2+s)L_{n+1}(x)}\, \frac{L_{n+1}(qx)}{L_n(qx)}.
\end{align*}
It follows from Lemma 1 that 
\begin{align}\label{eq:induction}
g_{2n+2}=\frac{s L_{n+1}(x)+xL_{n+2}(x)}{(x^2+s)L_{n+1}(x)}.
\end{align}
Since 
\begin{align}\label{eq:l1}
L_{n+1}(x)-xL_n(x)=(q^n-1)(xL_n(x)+sL_{n-1}(x)),
\end{align}
the  verification for   $g_{2n+3}$ is then straightforward.
We  derive  from \eqref{eq:cfstieltjes} and \eqref{keystep} that
\begin{equation}\label{eq:coeff} 
\left\{\begin{split}
c_{2n}&=(g_{2n}-1)g_{2n-1}A=(1-q^n)s\frac{L_{n-1}(x)}{L_n(x)},\quad \text{for $n\geq 1$};\\
c_{2n+1}&=(g_{2n+1}-1)g_{2n}A=\frac{L_{n+1}(x)}{L_n(x)},\quad \text{for $n\geq 0$}.
\end{split}\right.
\end{equation}
Invoking the \emph{contraction formula} (see \cite{zeng}), which transforms 
a S-continued fraction to a J-continued fraction, 
\begin{align}\label{eq:contraction}
 {1\over \displaystyle 1 -{c_1z\over
\displaystyle 1 - {c_2z\over
\displaystyle 1 - {c_{3}z\over
\displaystyle 1-{c_4z\over
\displaystyle  \ddots
}}}}}=\cfrac{1}{1-c_1z-\cfrac{c_1c_2 z^2}{
 1 -(c_2+c_3)z-\cfrac{c_3c_4z^2}{\ddots}}},
\end{align}
we obtain
\begin{equation}\label{eq:jacobi}
\left\{\begin{split}
 b_n&=\frac{h_{n+1}(x,(1-q)s;q)}{h_n(x,(1-q)s;q)}+(1-q^n)s\frac{h_{n-1}(x,(1-q)s;q)}{h_n(x,(1-q)s;q)}=q^nx,\\
\la_n&=\frac{h_{n}(x, (1-q)s;;q)}{h_{n-1}(x, (1-q)s;q)}\cdot (1-q^n)s\frac{h_{n-1}(x, (1-q)s;;q)}{h_n(x, (1-q)s;;q)}=(1-q^n) s.
\end{split}\right.
\end{equation}
This completes the proof. 
\end{proof}

\noindent{\bf Remark.}  Instead of the contraction formula~\eqref{eq:contraction}, we can also proceed as follows. Define a table $(A(n,k))_{n,k\geq 0}$ by
\begin{align}
 A(0,k)&=\delta_{k,0},\nonumber\\
A(n,0)&=c_1 A(n-1,1),\label{ctable}\\
A(n,k)&=A(n-1,k-1)+c_{k+1}A(n-1,k+1).\nonumber
\end{align}

In this case $A(2n,2k+1)=A(2n+1,2k)=0$ for all $n,k$.
If we define 
$$
a(n,k)=A(2n,2k),
$$
then it is easily verified that $a(n,k)$ satisfy \eqref{eq:stableau} with
\begin{align}
b_0=c_1,\quad b_n=c_{2n}+c_{2n+1},\quad \la_n=c_{2n}c_{2n-1}.
\end{align}
Substituting the values in \eqref{eq:coeff} for $c_n$ 
we obtain \eqref{eq:jacobi}.
Therefore 
$$\sum_{n}A(2n,0)t^n=\sum_n a(n,0)t^n=\sum_n T_n(x,s)t^n.
$$

As another application of this remark  we prove the following result.

\begin{prop}
 Let $w_n(m,q)=q^\frac{n((2m+1)n+1)}{2}$. Then
 $$
 \sum_{m\geq 0}w_n(m,q)t^n=\cfrac{1}{ 1 -b_0 t-
\cfrac{\la_1 t^2}{1 -b_1 t-
 \cfrac{\la_2t^2}{1 - b_2 t-
 \cfrac{\la_3t^2}{1 -
 \ddots}}}},
 $$
 where
\begin{align*}
b_n&=q^{(2m+1)n-m}(q^{(2m+1)n}-1)+q^{(2m+1)(2n+1)-m},\\
\la_n&=q^{(2m+1)(3n-1)-2m}(q^{(2m+1)n}-1).
\end{align*}
\end{prop}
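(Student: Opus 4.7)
The plan is to follow exactly the recipe of the Remark after Theorem~\ref{thm:Hh}: exhibit a Stieltjes-type continued fraction whose contraction produces the stated $J$-fraction, and verify that $S$-fraction through the explicit table $A(N,K)$ defined in~\eqref{ctable}.

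I begin by setting
$$
c_{2n}=q^{(2m+1)n-m}\bigl(q^{(2m+1)n}-1\bigr)\ \ (n\geq 1),\qquad c_{2n+1}=q^{(2m+1)(2n+1)-m}\ \ (n\geq 0).
$$
A one-line check confirms that $b_0=c_1$, $b_n=c_{2n}+c_{2n+1}$, and $\lambda_n=c_{2n-1}c_{2n}$ coincide exactly with the coefficients announced in the proposition. By the Remark, the proposition is therefore reduced to proving $\sum_{n\geq 0}A(2n,0)\,t^n=\sum_{n\geq 0}w_n(m,q)\,t^n$ for the corresponding table $A$.

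To evaluate $A(2n,0)$ I would guess and verify by induction on $N$, writing $Q:=q^{2m+1}$, the closed forms
$$
A(2n,2k)=q^{(n^2-k^2)m+\frac{n(n+1)-k(k+1)}{2}}\,{n\brack k}_{Q},
$$
$$
A(2n+1,2k+1)=q^{((n+1)^2-(k+1)^2)m+\frac{(n+1)(n+2)-(k+1)(k+2)}{2}}\,{n\brack k}_{Q}.
$$
The inductive step rests on the $q$-Pascal identity ${n+1\brack k+1}_{Q}={n\brack k}_{Q}+Q^{k+1}{n\brack k+1}_{Q}$ together with the auxiliary relation $(Q^{k+1}-1){n\brack k+1}_{Q}=(Q^{n-k}-1){n\brack k}_{Q}$, which absorbs the weight $c_{2(k+1)}$ cleanly into the $q$-Pascal expansion. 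Specialising $k=0$ in the first formula yields
$$
A(2n,0)=q^{n^{2}m+n(n+1)/2}=q^{n((2m+1)n+1)/2}=w_n(m,q),
$$
and the contraction procedure of \eqref{eq:contraction} (or equivalently of the Remark) then delivers the announced $J$-continued fraction.

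The only nontrivial step is guessing the closed form for $A(2n,2k)$: once the first few rows of the table are tabulated, the Gaussian binomial ${n\brack k}_{Q}$ and the quadratic-in-$(n,k)$ shape of the $q$-exponent emerge at once, and the remaining verification is a routine bookkeeping of exponents.
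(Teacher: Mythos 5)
Your proof is correct and is essentially identical to the paper's: the closed forms you guess for $A(2n,2k)$ and $A(2n+1,2k+1)$ are exactly the paper's $\frac{w_n(m,q)}{w_k(m,q)}{n\brack k}_{q^{2m+1}}$ and $\frac{w_{n+1}(m,q)}{w_{k+1}(m,q)}{n\brack k}_{q^{2m+1}}$, since $w_n(m,q)=q^{mn^2+n(n+1)/2}$. The paper likewise reduces the claim to verifying the table \eqref{ctable} for these entries and then contracting (it leaves that check as ``easily verified''), so your $q$-Pascal and absorption-identity details are just that same computation made explicit.
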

\begin{proof}
 Let 
$$
A(2n,2k)=\frac{w_n(m,q)}{w_k(m,q)}{n\brack k}_{q^{2m+1}}\quad
\text{and}\quad A(2n+1,2k+1)=\frac{w_{n+1}(m,q)}{w_{k+1}(m,q)}{n\brack k}_{q^{2m+1}}.
$$
Then it is easily verified that the table \eqref{ctable} holds with 
$c_{2n}=q^{(2m+1)n-m}(q^{(2m+1)n}-1)$ and $c_{2n+1}=q^{(2m+1)(2n+1)-m}$.
Therefore 
$$
\sum_{n}A(2n,0)t^n=\sum_n a(n,0)t^n=\sum_n w_n(m,q)t^n.
$$
\end{proof}

\section{Connection with $q$-Fibonacci polynomials and $q$-Lucas polynomials}

We define the Lucas polynomials  by
$$
l_n(x,s)=xl_{n-1}(x,s)+sl_{n-2}(x,s)\quad \text{for}�\quad n>2,
$$
with initial values $l_1(x,s)=x$ and $l_2(x,s)=x^2+2s$. They have the explicit formula
\begin{align}\label{eq:luc1}
l_n(x,s)=\sum_{2k\leq n}
\frac{n}{n-k} {n-k\choose k}s^k x^{n-2k}  \quad (n>0).
\end{align}
Furthermore we define $l_0(x,s)=1$. Note that this definition differs from the usual one in which 
$l_0(x,s)=2$. 

The Fibonacci polynomials are defined by
$$
f_n(x,s)=xf_{n-1}(x,s)+sf_{n-2}(x,s)
$$
with $f_0(x,s)=0$ and $f_1(x,s)=1$.  They have the explicit formula
\begin{align}\label{eq:fib1}
f_n(x,s)=\sum_{k=0}^{\left\lfloor\frac{n-1}{2}\right\rfloor} {n-1-k\choose k}s^k x^{n-1-2k}.
\end{align}

We first establish the following inversion of  \eqref{eq:luc1} and \eqref{eq:fib1}.
\begin{lem}
\begin{align}\label{eq:lem}
x^n&=\sum_{2k\leq n}{n\choose k} s^kl_{n-2k}(x,-s),\\
x^n&=\sum_{2k\leq n+1}\left({n\choose k}-{n\choose k-1}\right)s^k f_{n+1-2k}(x,-s).\label{eq:lemfib}
\end{align}
\end{lem}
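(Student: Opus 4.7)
My plan is to exploit Binet-type closed forms for $l_n$ and $f_n$. Introduce $\alpha,\beta$ as the roots of $z^2 - xz + s = 0$, so $\alpha + \beta = x$ and $\alpha\beta = s$. Both sequences $l_n(x,-s)$ and $f_n(x,-s)$ satisfy the linear recurrence $u_n = xu_{n-1} - su_{n-2}$, so matching the given initial data ($l_1 = x$, $l_2 = x^2 - 2s$; $f_0 = 0$, $f_1 = 1$) yields
\[
l_n(x,-s) = \alpha^n + \beta^n \quad (n \geq 1), \qquad f_n(x,-s) = \frac{\alpha^n - \beta^n}{\alpha - \beta} \quad (n \geq 0).
\]
The non-standard convention $l_0(x,-s) = 1$ (rather than $\alpha^0 + \beta^0 = 2$) will turn out to absorb exactly half of a middle term in \eqref{eq:lem}. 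I treat $\alpha,\beta$ as formal indeterminates; this is harmless since both identities are polynomial identities in $x$ and $s$.

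For \eqref{eq:lem}, I would expand $x^n = (\alpha + \beta)^n$ by the binomial theorem and symmetrize. Pairing the $k$-th and $(n-k)$-th terms gives
\[
\binom{n}{k}(\alpha\beta)^k\bigl(\alpha^{n-2k} + \beta^{n-2k}\bigr) = \binom{n}{k}\, s^k\, l_{n-2k}(x,-s)
\]
for each $k < n/2$. When $n$ is even there is a leftover middle term $\binom{n}{n/2}(\alpha\beta)^{n/2} = \binom{n}{n/2}\, s^{n/2}$, which coincides with $\binom{n}{n/2}\, s^{n/2}\, l_0(x,-s)$ precisely because $l_0 = 1$; this is the one delicate bookkeeping point in the argument.

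For \eqref{eq:lemfib}, I would apply the analogous antisymmetrization to $(\alpha - \beta)x^n$. Shifting indices in $\alpha(\alpha+\beta)^n - \beta(\alpha+\beta)^n$, one gets
\[
(\alpha - \beta)\, x^n = \sum_{k=0}^{n+1}\left(\binom{n}{k} - \binom{n}{k-1}\right)\alpha^{n+1-k}\beta^{k},
\]
with coefficient $C_k := \binom{n}{k} - \binom{n}{k-1}$ antisymmetric under $k \leftrightarrow n+1-k$. Pairing those two indices yields $C_k\, s^k (\alpha - \beta)\, f_{n+1-2k}(x,-s)$ for each $2k \leq n+1$; when $n$ is odd the self-paired middle index has both $C_{(n+1)/2} = 0$ and $f_0 = 0$, so it drops out harmlessly. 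Dividing by $\alpha - \beta$ (legitimate since the resulting coefficients are manifestly polynomials in $x,s$) produces \eqref{eq:lemfib}. I foresee no real obstacle beyond the case split on the parity of $n$, where the unusual convention $l_0 = 1$ is what makes \eqref{eq:lem} work uniformly in $n$.
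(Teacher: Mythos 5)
Your proof is correct, and it takes a genuinely different route from the paper. The paper deduces the lemma from the Tchebyshev inverse relations of Riordan: it observes that the explicit expansions \eqref{eq:luc1} and \eqref{eq:fib1} are exactly the ``forward'' halves of the two classical inversion pairs \eqref{eq:tch1} and \eqref{eq:tch}, reads off the inverses, and then spends a few lines reconciling the summation range $2k\leq n+1$ in \eqref{eq:lemfib} with the range $\lfloor n/2\rfloor$ produced by the inversion (using $\binom{n}{k}=\binom{n}{k-1}$ at the middle index for odd $n$). Your Binet-form argument is self-contained: writing $x^n=(\alpha+\beta)^n$ with $\alpha\beta=s$ and symmetrizing (resp.\ multiplying by $\alpha-\beta$ and antisymmetrizing) gives both identities directly from the binomial theorem, with the cancellation in $\mathbb{Z}[\alpha,\beta]$ justified because that ring is an integral domain and both sides are symmetric, hence polynomials in $x$ and $s$. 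The two delicate points you flag are exactly the right ones, and your treatment of each is correct: the convention $l_0=1$ (rather than $2$) absorbs the unpaired middle term $\binom{n}{n/2}s^{n/2}$ when $n$ is even, and for odd $n$ the middle index of \eqref{eq:lemfib} contributes nothing since both $C_{(n+1)/2}=0$ and $f_0=0$ --- this is the same boundary phenomenon the paper handles via $\binom{n}{k}=\binom{n}{k-1}$. What your approach buys is independence from the external inversion formulas and a transparent explanation of why the nonstandard normalization $l_0=1$ is forced; what the paper's approach buys is brevity and an explicit link to the classical Tchebyshev inversion machinery, which contextualizes the coefficients $\binom{n}{k}$ and $\binom{n}{k}-\binom{n}{k-1}$.
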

\begin{proof}
Recall the Tchebyshev inverse relations~\cite[p. 54-62]{Ri}:
\begin{align}\label{eq:tch1}
b_n=\sum_{k=0}^{\left\lfloor\frac{n}{2}\right\rfloor} 
(-1)^k\frac{n}{n-k}{n-k\choose k}a_{n-2k}
\Longleftrightarrow
a_n=\sum_{k=0}^{\left\lfloor\frac{n}{2}\right\rfloor}
{n\choose k}
b_{n-2k},
\end{align}
where $a_0=b_0=1$, and 
\begin{align}\label{eq:tch}
b_n=\sum_{k=0}^{\left\lfloor\frac{n}{2}\right\rfloor} 
(-1)^k{n-k\choose k}a_{n-2k}
\Longleftrightarrow
a_n=\sum_{k=0}^{\left\lfloor\frac{n}{2}\right\rfloor}
\left[{n\choose k}-{n\choose k-1}\right]
b_{n-2k}.
\end{align}
We derive immediately \eqref{eq:lem} from \eqref{eq:luc1} and \eqref{eq:tch1}.
Clearly \eqref{eq:fib1} is equivalent to the left identity in \eqref{eq:tch} with $a_n=\left(\frac{x}{\sqrt{s}}\right)^n$ and $b_n=\frac{f_{n+1}(x,-s)}{(\sqrt{s})^n}$. By inversion we find
\begin{align}\label{eq:lem'}
x^n=\sum_{k=0}^{\left\lfloor\frac{n}{2}\right\rfloor}
\left({n\choose k}-{n\choose k-1}\right)
s^k f_{n+1-2k}(x,-s).
\end{align}
To see the equivalence of \eqref{eq:lemfib} and \eqref{eq:lem'} we notice that 
\begin{itemize}
\item if $n$ is odd, then
${n\choose k}={n\choose k-1}$ for $k=\left\lfloor\frac{n+1}{2}\right\rfloor$,
\item if $n$ is even, then $\left\lfloor\frac{n+1}{2}\right\rfloor=\left\lfloor\frac{n}{2}\right\rfloor$.
\end{itemize}
\end{proof}

\medskip
Define the $q$-Lucas and $q$-Fibonacci polynomials by
\begin{align}
L_n(x,s)&=l_n(x+(q-1)s\D_q,s)\cdot 1,\\
F_n(x,s)&=f_n(x+(q-1)s{\D}_q, s)\cdot 1. 
\end{align}
It is known (see \cite{Cigler03} and \cite{Cigler09} ) that  they have the explicit formulae
\begin{align}
L_n(x,s)&=\sum_{k=0}^{\left\lfloor \frac{n}{2}\right\rfloor}q^{k\choose 2} \frac{[n]}{[n-k]}{n-k\brack k}s^kx^{n-2k},\label{eq:qluc}\\
F_n(x,s)&=\sum_{k=0}^{\left\lfloor \frac{n-1}{2}\right\rfloor}q^{k+1\choose 2}{n-1-k\brack k}s^kx^{n-1-2k},\label{eq:qfib}
\end{align}
for $n>0$,  with $L_0(x,s)=1$ and  $F_0(x,s)=0$.
\begin{thm}\label{thm:hf}
We have 
\begin{align}\label{eq:explicit}
H_n(x, (q-1)s|q)
&=\sum_{k=0}^{\left\lfloor \frac{n}{2}\right\rfloor}
{n\choose  k}s^kL_{n-2k}(x,-s)\\
&=\sum_{k=0}^{\left\lfloor \frac{n+1}{2}\right\rfloor}\left({n\choose k}-{n\choose k-1}\right) s^k F_{n+1-2k}(x,-s).\label{eq:explicit2}
\end{align}
\end{thm}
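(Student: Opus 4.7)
The plan is to recognize that both sides of each identity are obtained by applying the \emph{same} operator polynomial to $1$. By Theorem~\ref{thm:main},
\[
H_n(x,(q-1)s\,|\,q)=\bigl(x-(q-1)s\D_q\bigr)^n\cdot 1 = X^n\cdot 1,
\]
where I set $X:=x+(1-q)s\D_q$. Since the substitution $s\mapsto -s$ in the definitions of $L_m$ and $F_m$ turns the operator $x+(q-1)s\D_q$ into exactly the same $X$, we also have
\[
L_m(x,-s)=l_m(X,-s)\cdot 1,\qquad F_m(x,-s)=f_m(X,-s)\cdot 1.
\]

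First I would invoke the previous lemma (identities \eqref{eq:lem} and \eqref{eq:lemfib}), which are \emph{polynomial identities in the single commuting indeterminate} $x$, with $s$ playing the role of a scalar parameter. Because the left and right sides are equal as polynomials in $x$, the identities remain valid after substituting any single operator for $x$ (no commutation issue arises, since all terms are polynomials in one operator). Substituting $x\mapsto X$ in
\[
x^n=\sum_{2k\leq n}\binom{n}{k}s^kl_{n-2k}(x,-s)\qquad\text{and}\qquad
x^n=\sum_{2k\leq n+1}\left(\binom{n}{k}-\binom{n}{k-1}\right)s^kf_{n+1-2k}(x,-s)
\]
yields the operator identities
\[
X^n=\sum_{2k\leq n}\binom{n}{k}s^kl_{n-2k}(X,-s),\qquad
X^n=\sum_{2k\leq n+1}\left(\binom{n}{k}-\binom{n}{k-1}\right)s^kf_{n+1-2k}(X,-s).
\]

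Finally, I would apply both operator identities to the constant function $1$. On the left, $X^n\cdot 1=H_n(x,(q-1)s\,|\,q)$. On the right, each term $l_{n-2k}(X,-s)\cdot 1$ becomes $L_{n-2k}(x,-s)$, and each $f_{n+1-2k}(X,-s)\cdot 1$ becomes $F_{n+1-2k}(x,-s)$, giving exactly \eqref{eq:explicit} and \eqref{eq:explicit2}.

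The only conceptual point that needs care—and the one step where mistakes are easy—is the substitution of the operator $X$ into the Tchebyshev-type inverse relations from the preceding lemma; one must verify that the sign conventions line up so that the operator appearing in $L_m(x,-s)$ and $F_m(x,-s)$ coincides with the one appearing in $(x-(q-1)s\D_q)^n$. Once this matching $X=x+(1-q)s\D_q$ is established, the theorem is a direct evaluation of two classical scalar identities at an operator argument, with no further computation required.
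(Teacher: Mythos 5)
Your proof is correct and follows essentially the same route as the paper: apply the homomorphism $x\mapsto x-(q-1)s\D_q$ to the Tchebyshev-type inversion identities \eqref{eq:lem} and \eqref{eq:lemfib}, then evaluate at $1$, using Theorem~\ref{thm:main} on the left and the operator definitions of $L_m$ and $F_m$ on the right. Your explicit check that the sign conventions match (so that $L_m(x,-s)=l_m(X,-s)\cdot 1$ with $X=x+(1-q)s\D_q$) is exactly the point the paper's one-line proof relies on, and your remark that no commutation issue arises because everything is a polynomial in the single operator $X$ is the correct justification for the substitution.
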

\begin{proof}
Since
\begin{align*}
L_n(x,-s)&=l_n(x-(q-1)s{\D}_q,s)\cdot 1,\\
F_n(x,-s)&=f_n(x-(q-1)s{\D}_q,s)\cdot 1,
\end{align*}
the theorem follows by applying the homomorphism $x\mapsto x-(q-1)s\D_q$ to
\eqref{eq:lem} and \eqref{eq:lemfib}.
\end{proof}

We derive some consequences of  the formula \eqref{eq:explicit2}. 
 \begin{cor} We have 
\begin{align}\label{ex1}
\begin{split}
H_n(1,q-1|q)&=\sum_{k=-n}^{n}(-1)^k
q^{\frac{k(3k+1)}{2}}       {n\choose \left\lfloor\frac{n-3k}{2}\right\rfloor}\\
&=\sum_{k=0}^{n}(-1)^k q^{\frac{k(3k+1)}{2}} 
{n\choose \left\lfloor\frac{n-3k}{2}\right\rfloor}+
\sum_{k=1}^{n}(-1)^kq^{\frac{k(3k-1)}{2}}{n\choose \left\lfloor\frac{n-3k+1}{2}\right\rfloor}.
\end{split}
\end{align}
\end{cor}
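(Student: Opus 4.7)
The plan is to combine the Fibonacci expansion \eqref{eq:explicit2} (specialized at $x=s=1$) with a $q$-analogue of Euler's pentagonal number theorem. Setting $x=s=1$ in \eqref{eq:explicit2} gives
\begin{align*}
H_n(1,q-1|q)=\sum_{j=0}^{\left\lfloor(n+1)/2\right\rfloor}\left({n\choose j}-{n\choose j-1}\right)F_{n+1-2j}(1,-1),
\end{align*}
so the whole question reduces to evaluating $F_m(1,-1)$ and a sum rearrangement.

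The key lemma is the Schur-type finite pentagonal identity
\begin{align*}
F_m(1,-1)=\sum_{\substack{k\geq 0\\ 3k\leq m}}(-1)^kq^{k(3k-1)/2}+\sum_{\substack{k\geq 1\\ 3k+1\leq m}}(-1)^kq^{k(3k+1)/2},
\end{align*}
whose $m\to\infty$ limit is Euler's identity $\prod_{n\geq 1}(1-q^n)=\sum_{k\in\Z}(-1)^kq^{k(3k-1)/2}$. I would prove the finite version by induction on $m$ starting from \eqref{eq:qfib}: both sides are unchanged when $m\equiv 2\pmod 3$, and each gains exactly one new pentagonal monomial in the other two residue classes (the left-hand side via the $q$-Pascal relation ${m-1-j\brack j}_q={m-2-j\brack j}_q+q^{m-1-2j}{m-2-j\brack j-1}_q$, the right-hand side by inspection of the new exponent added). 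This identity is essentially present in Cigler's $q$-Fibonacci papers cited before Theorem~\ref{thm:hf}.

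Substituting the lemma into the expansion above and interchanging summation, each inner sum telescopes since $\sum_{j=0}^{J}\left({n\choose j}-{n\choose j-1}\right)={n\choose J}$, and one obtains
\begin{align*}
H_n(1,q-1|q)=\sum_{k\geq 0}(-1)^kq^{k(3k-1)/2}{n\choose \left\lfloor(n-3k+1)/2\right\rfloor}+\sum_{k\geq 1}(-1)^kq^{k(3k+1)/2}{n\choose \left\lfloor(n-3k)/2\right\rfloor},
\end{align*}
which is the split form on the second line of \eqref{ex1} (noting ${n\choose \left\lfloor(n+1)/2\right\rfloor}={n\choose \left\lfloor n/2\right\rfloor}$ so that the $k=0$ terms of the two formulations coincide). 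To collapse these into the symmetric sum $\sum_{k=-n}^n$ on the first line, I would reindex the $q^{k(3k-1)/2}$ piece by $k\mapsto -k$: this converts the exponent into $k(3k+1)/2$, and by the binomial symmetry ${n\choose j}={n\choose n-j}$ we get ${n\choose \left\lfloor(n+3k+1)/2\right\rfloor}={n\choose \left\lfloor(n-3k)/2\right\rfloor}$; the range $-n\leq k\leq n$ appears for free since the binomial vanishes as soon as $|k|>n$.

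The main obstacle is the Schur-type evaluation of $F_m(1,-1)$: the signed $q$-binomial sum \eqref{eq:qfib} must be shown to collapse to a partial pentagonal series, and verifying this cancellation is the only step with real content. Everything afterwards — swapping sums, telescoping, reindexing and the floor-function bookkeeping — is routine.
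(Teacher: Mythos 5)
Your proposal is correct and follows essentially the same route as the paper: specialize \eqref{eq:explicit2} at $x=s=1$, evaluate $F_m(1,-1)$ as a truncated pentagonal-number series, then interchange sums and telescope the coefficients $\binom{n}{k}-\binom{n}{k-1}$, finishing with the reindexing $k\mapsto -k$ and the symmetry $\binom{n}{\lfloor (n+1)/2\rfloor}=\binom{n}{\lfloor n/2\rfloor}$. The only difference is that the paper simply cites Cigler's evaluation of $F_m(1,-1)$ from \cite{Cigler03} rather than reproving it by induction as you propose, but your stated finite pentagonal identity agrees with the cited one, so the argument goes through.
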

\begin{proof}
Let $r(k)=\frac{k(3k+1)}{2}$. Then, it  follows from \cite{Cigler03} that 
$$
F_{3n}(1,-1)=\sum_{k=-n}^{n-1}(-1)^k q^{r(k)},\quad F_{3n+1}(1,-1)=F_{3n+2}(1,-1)=\sum_{k=-n}^{n}(-1)^k q^{r(k)},
$$
or 
\begin{align}
F_{n}(1,-1)=\sum_{-n\leq 3j\leq n-1}(-1)^j q^{\frac{j(3j+1)}{2}}.
\end{align}

Let $w(n)=\sum_{k=0}^{ \left\lfloor\frac{n+1}{2}\right\rfloor}
\left({n\choose k}-{n\choose k-1}\right)F_{n+1-2k}(1,-1) $. Consider a fixed term $(-1)^jq^{r(j)}$. This term occurs in $F_n(1,-1)$ if $-\frac{n}{3}\leq j\leq \frac{n-1}{3}$.
We are looking for all $k$, such that this term occurs in 
$F_{n+1-2k}(1,-1)$. For $j\geq 0$ the largest such number is
$k_0= \left\lfloor\frac{n-3j}{2}\right\rfloor$. For $j\leq \frac{n-2k}{3}$ is equivalent with $k\leq k_0$. Therefore the coefficient of
$(-1)^jq^{r(j)}$ in $w(n)$ is $\sum_{k=0}^{k_0}\left({n\choose k}-{n\choose k-1}\right)={n\choose k_0}$. For $j<0$ we have 
$-\frac{n+1-2k}{3}\leq j$ is equivalent with $k\leq \lfloor \frac{n+1-3j}{2}\rfloor$. This gives the last sum in \eqref{ex1}.
\end{proof}
\begin{cor}
We have
\begin{align}\label{eq:H2n}
H_{2n}\left(1, \frac{q-1}{q} |q \right)=q^{-n} \sum_{j=-n}^n 
\left({2n\choose n-3j}-{2n\choose n-3j-1}\right)q^{2j(3j+1)},
\end{align}
and 
\begin{align}\label{eq:H2n1}
H_{2n+1}(1,\frac{q-1}{q}|q )=q^{-n}\sum_{j=-n}^n\left({2n+1\choose n-3j}-{2n+1\choose n-3j-1}\right)q^{2j(3j+2)}.
\end{align}
\end{cor}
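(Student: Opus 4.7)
The plan is to specialize Theorem~\ref{thm:hf}, in the form \eqref{eq:explicit2}, to $x=1$ and $s=1/q$, yielding
\begin{equation*}
H_N(1,(q-1)/q\,|\,q) = \sum_{k}\left(\binom{N}{k}-\binom{N}{k-1}\right) q^{-k}\, F_{N+1-2k}(1,-1/q),
\end{equation*}
where I write $N$ in place of the $n$ of \eqref{eq:explicit2} to avoid collision with the $n$ of \eqref{eq:H2n}--\eqref{eq:H2n1}. Everything now reduces to evaluating $F_m(1,-1/q)$, which will play here the role that the pentagonal-number expansion of $F_m(1,-1)$ played in the proof of \eqref{ex1}. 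From \eqref{eq:qfib} one has $F_m(1,-1/q) = \sum_k (-1)^k q^{\binom{k}{2}}\binom{m-1-k}{k}_q$, and I expect to establish the closed form
\begin{equation*}
F_{3\ell}(1,-1/q)=0,\quad F_{3\ell+1}(1,-1/q)=(-1)^\ell q^{\ell(3\ell-1)/2},\quad F_{3\ell+2}(1,-1/q)=(-1)^\ell q^{\ell(3\ell+1)/2},
\end{equation*}
a Rogers--Ramanujan-type polynomial identity (alternatively verified by an easy induction on $m$ from the recurrence for the $q$-Fibonacci polynomials).

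Once this identity is in hand, I split the sum over $k$ according to the residue of $m:=N+1-2k$ modulo $3$, discarding the vanishing terms $3\mid m$. Taking $N=2n$ for \eqref{eq:H2n}, the subcase $m=3\ell+1$ forces $\ell$ even; writing $\ell=2j$ with $j\ge 0$ gives $k=n-3j$ and the contribution
\begin{equation*}
q^{-n}\left(\binom{2n}{n-3j}-\binom{2n}{n-3j-1}\right) q^{2j(3j+1)}.
\end{equation*}
The subcase $m=3\ell+2$ forces $\ell$ odd, $\ell=2j+1$ with $j\ge 0$; after the reindexing $j\mapsto -j-1$ together with the symmetry $\binom{2n}{n+a}=\binom{2n}{n-a}$, this produces exactly the same expression but ranging over $j\le -1$. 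The two ranges combine into a single sum over $j\in\mathbb{Z}$, giving \eqref{eq:H2n}. The odd case \eqref{eq:H2n1} is handled identically with $N=2n+1$: now the subcase $m=3\ell+2$ (with $\ell=2j$, $j\ge 0$) is already in target form with exponent $2j(3j+2)$, and the subcase $m=3\ell+1$ (with $\ell=2j+1$) folds in after the analogous reindexing, this time using $\binom{2n+1}{a}=\binom{2n+1}{2n+1-a}$.

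The principal obstacle is the closed form for $F_m(1,-1/q)$; the remaining steps are routine bookkeeping, but care is required to track the parity of $\ell$ and the sign $(-1)^\ell$ in each subcase, and to verify that $j\mapsto -j-1$ correctly converts $6j^2+10j+4\mapsto 2j(3j+1)$ in the even case and $6j^2+8j+2\mapsto 2j(3j+2)$ in the odd case.
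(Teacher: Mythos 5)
Your proposal is correct and is essentially the paper's own proof: both specialize \eqref{eq:explicit2} at $x=1$, $s=1/q$ and then invoke the pentagonal-type evaluation of $F_m(1,-1/q)$ (which the paper recalls from \cite{Cigler03}), finishing with the same case analysis modulo $3$ and the folding of the negative-$j$ terms via $j\mapsto -j-1$ and the symmetry of the binomial coefficients. Two small remarks: your stated values $F_{3\ell+1}(1,-1/q)=(-1)^\ell q^{\ell(3\ell-1)/2}$ and $F_{3\ell+2}(1,-1/q)=(-1)^\ell q^{\ell(3\ell+1)/2}$ are the correct ones (the paper's displayed recollection interchanges the two exponents, although its subsequent bullet computations use the correct values), and the promised ``easy induction'' must accommodate the shift $s\mapsto qs$ in the three-term recurrence $F_m(x,s)=xF_{m-1}(x,qs)+qsF_{m-2}(x,qs)$, e.g.\ by proving the evaluation for the whole family $F_m(1,-q^{a})$ or by appealing to the classical Schur polynomial identity $\sum_k(-1)^kq^{\binom{k}{2}}{m-1-k\brack k}$.
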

\begin{proof}
Note that 
\begin{align}
H_{2n}(1, \frac{q-1}{q}|q)&=\frac{1}{q^n}\sum_{k=0}^n \left({2n\choose n-k}-{2n\choose n-k-1}\right)q^{k}F_{2k+1}(1,-\frac{1}{q}),\label{eq:step1}\\
H_{2n+1}(1, \frac{q-1}{q}|q)&=\frac{1}{q^n}\sum_{k=0}^{n+1} \left({2n+1\choose n+1-k}-{2n+1\choose n-k}\right)q^{k-1}F_{2k}(1,-\frac{1}{q}).\label{eq:step2}
\end{align}
Recall  (see \cite{Cigler03}) that
\begin{align}
F_{3n}(1, -\frac{1}{q})=0, \quad F_{3n+1}(1, -\frac{1}{q})=(-1)^n q^{r(n)},\quad F_{3n+2}(1, -\frac{1}{q})=(-1)^nq^{r(-n)}.
\end{align}
Hence
\begin{itemize}
\item if $k=3j$ then $2k+1=6j+1$ and $q^kF_{2k+1}(1,-\frac{1}{q})=q^{3j}F_{6j+1}(1,-\frac{1}{q})=q^{2j(3j+1)}$.
\item if $k=3j+1$ then $2k+1=6j+3$ and $q^{k}F_{2k+1}(1,-\frac{1}{q})=0$.
\item If $k=3j+2$ then $2k+1=6j+5$ and $q^{k}F_{2k+1}(1,-\frac{1}{q})=q^{3j+2}$.
\item if $k=3j$ then $2k=6j$ and $q^{k-1}F_{2k}(1,-\frac{1}{q})=0$.
\item if $k=3j+1$ then $2k=6j+2$ and $q^{k-1}F_{2k}(1,-\frac{1}{q})=q^{2j(3j+2)}$.
\item If .$k=3j+2$ then $2k=6j+4$ and $q^{k-1}F_{2k}(1,-\frac{1}{q})=-q^{(3j+1)(2j+2)}$.
\end{itemize}
Substituting the above values into \eqref{eq:step1} and \eqref{eq:step2} yields \eqref{eq:H2n} and \eqref{eq:H2n1}.
\end{proof}

Finally, from \eqref{eq:explicit} and  \eqref{eq:key} we derive two explicit formulae for the coefficient $c(n,k,q)$.
\begin{prop}\label{prop:mathieu}
 If $k\equiv n\pmod{2}$ then
\begin{align}
c(n,k,q)&=\sum_{m\in M(n,k)}q^{c(m)+\cro(m)} \nonumber\\
&=(1-q)^{-\frac{n-k}{2}}\sum_{j\geq 0}{n\choose \frac{n-k-2j}{2}} (-1)^j  
q^{j\choose 2} \frac{[k+2j]}{[k+j]}{k+j\brack j}\label{eq:lucasformula}\\
&=(1-q)^{-\frac{n-k}{2}}\sum_{j\geq 0}
\left({n\choose \frac{n-k-2j}{2}}-{n\choose \frac{n-k-2j-2}{2}}\right)
(-1)^j q^{{j+1\choose 2}}{k+j\brack k}.\label{eq:cn}
\end{align}
\end{prop}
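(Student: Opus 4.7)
The plan is to equate coefficients of $x^k$ in the two expansions of $H_n(x,(q-1)s|q)$ provided by Theorem~\ref{thm:hf}. By \eqref{eq:cig1} the left-hand side is
$$
H_n(x,(q-1)s|q)=\sum_k c(n,k,q)\,x^k\,((1-q)s)^{(n-k)/2},
$$
so, writing $N:=(n-k)/2$, the coefficient of the monomial $x^k s^N$ on the left equals $c(n,k,q)(1-q)^N$. It therefore suffices to compute the same coefficient from each of the two right-hand sides in Theorem~\ref{thm:hf} and then divide through by $(1-q)^N$.

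For \eqref{eq:lucasformula} I would substitute the explicit formula \eqref{eq:qluc} for $L_{n-2J}(x,-s)$ into the Lucas expansion \eqref{eq:explicit}, producing a double sum over $(J,I)$ whose general term is
$$
{n\choose J}\,q^{I\choose 2}\,\frac{[n-2J]}{[n-2J-I]}{n-2J-I\brack I}(-1)^I s^{J+I}x^{n-2J-2I}.
$$
Requiring the exponent of $x$ to equal $k$ forces $J+I=N$, so the double sum collapses to a single sum, which I would reparameterize by $j:=I=N-J$. The identities $n-2J=k+2j$ and $n-2J-I=k+j$ then turn the general term into precisely the summand of \eqref{eq:lucasformula}.

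The argument for \eqref{eq:cn} is structurally identical: substitute \eqref{eq:qfib} for $F_{n+1-2J}(x,-s)$ into the Fibonacci expansion \eqref{eq:explicit2}, impose $J+I=N$, and reindex by $j=I=N-J$. One checks that ${n-2J-I\brack I}$ becomes ${k+j\brack j}={k+j\brack k}$, the $q$-power $q^{I+1\choose 2}$ becomes $q^{j+1\choose 2}$, and the prefactor ${n\choose J}-{n\choose J-1}$ becomes ${n\choose N-j}-{n\choose N-j-1}$, matching \eqref{eq:cn}. Since the proposition is extracted directly from results already established, the work is essentially pure bookkeeping; the main pitfall is the boundary term $n-2J=0$ in the Lucas expansion, where $L_0=1$ is defined separately from \eqref{eq:qluc}, and one must check that the reindexed summand handles this boundary uniformly (which it does under the convention $[0]/[0]=1$).
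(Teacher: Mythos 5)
Your proposal is correct and coincides with the paper's intended derivation: the authors state that the proposition follows "from \eqref{eq:explicit} and \eqref{eq:key}", i.e.\ precisely by substituting the explicit formulae \eqref{eq:qluc} and \eqref{eq:qfib} into Theorem~\ref{thm:hf} and extracting the coefficient of $x^k((1-q)s)^{(n-k)/2}$, which is what you do (including the correct treatment of the $L_0$ boundary term). The paper's appendix gives a genuinely different \emph{second} proof via the orthogonality of the continuous $q$-Hermite polynomials, but your argument matches the first.
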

In the next section we shall give another proof by using the orthogonality of the continuous $q$-Hermite polynomials.
Some remarks about the above formula are in order.
\begin{itemize}
\item[(a)] Formula  \eqref{eq:cn} has been obtained with different means by Josuat-Verg\`es \cite[Proposition 12]{JV} and is also used in \cite{CJPR}. It is easy to see that \eqref{eq:lucasformula} and \eqref{eq:cn} are equal by writing
$$
 \frac{[k+2j]}{[k+j]}= q^j+\frac{[j]}{[k+j]}.
$$
\item[(b)] When $k=0$, 
we recover 
a formula of Touchard-Riordan (see \cite{aigner,ISV,Pe}):
\begin{align}\label{eq:TR}
c(2n,0,q)=\sum_{m\in M(2n,0)} q^{\cro(m)}=\frac{1}{(1-q)^n}\sum_{j=-n}^n{2n\choose n+j}(-1)^jq^{j\choose 2}.
\end{align}
\item[(c)] Notice that  $H_{2n}(0,-1|q)=c(2n,0,q)$ and $H_{2n+1}(0,-1|q)=c(2n+1,0,q)=0$. Hence 
$$
\sum_{n\geq 0}c(n,0,q)t^n=\cfrac{1}{1-\cfrac{t^2}{1-\cfrac{[2]_q t^2}{1-\cfrac{[3]_qt^2}{1-\cdots}}}}.
$$
We derive then a known result
(see \cite{ISV}): the coefficient $c(n,0,q)$  coincides with the $n$-th  moment of the continuous $q$-Hermite polynomials $\tilde H(x,1|q)$, i.e., 
$$
\F(z^n)=c(n,0,q),
$$
where $\F$ is the linear functional on the polynomials in $z$ defined by $\F(\tilde H_n(x,1|q))=\delta_{n,0}$.  
\end{itemize}

As in \cite{KSZ} we can derive another double sum expression for $H_n(x,s|q)$. We omit the proof.
 
\begin{prop} We have
\begin{align}
H_n(x,s|q)&=
\sum_{k=0}^n(-1)^kq^{-{k\choose 2}}\sum_{i=0}^{k}
\left(\frac{s}{x(q-1)}q^{-i}+xq^i\right)^{n}\nonumber\\
&\qquad\qquad \times \prod_{j=0, j\neq i}^{k}\frac{1}{q^{-i}-q^{-j}+x^2\frac{q-1}{s}(q^i-q^j)}.\label{eq:explicitbis}
\end{align}
\end{prop}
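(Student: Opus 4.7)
The plan is to recognize that the linear factors appearing in the explicit formula~\eqref{rescal} for $P_n(z)$ have roots precisely of the form $r_i:=\frac{s}{x(q-1)}q^{-i}+xq^i$, which are exactly the points appearing in the target formula, and to use these $r_i$ as nodes for Newton interpolation. Indeed $(q-1)q^ixz-s-(q-1)q^{2i}x^2=(q-1)q^ix(z-r_i)$, and a direct algebraic check gives the identity
\begin{align*}
r_i-r_j=\frac{s}{x(q-1)}\left[q^{-i}-q^{-j}+x^2\frac{q-1}{s}(q^i-q^j)\right],
\end{align*}
so the denominators in~\eqref{eq:explicitbis} are, up to a factor of $(s/(x(q-1)))^k$, exactly $\prod_{j\neq i}(r_i-r_j)$.

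Next, I would rewrite~\eqref{rescal} in terms of $\pi_k(z):=\prod_{i=0}^{k-1}(z-r_i)$ as
\begin{align*}
P_n(z)=\left(\frac{s}{x(q-1)}\right)^n\sum_{k=0}^n\frac{(q^{-n};q)_k}{(q;q)_k}\left(\frac{-xq(q-1)}{s}\right)^k q^{{k\choose 2}}\pi_k(z),
\end{align*}
and set $G_k:=\F_{x,0,-s}(\pi_k(z))$. Applying $\F_{x,0,-s}$ to the above and invoking $\F(P_n)=\delta_{n,0}$ produces a triangular system for the $G_k$ with $G_0=1$. I claim its unique solution is $G_k=(-1)^kq^{-{k\choose 2}}(s/(x(q-1)))^k$: substituting this candidate collapses the defining relation (for $n\ge 1$) to $\sum_{k\ge 0}\frac{(q^{-n};q)_k}{(q;q)_k}q^k={}_1\phi_0(q^{-n};-;q,q)=\frac{(q^{1-n};q)_\infty}{(q;q)_\infty}=0$, the vanishing coming from the factor $1-q^0$ in the numerator once $n\ge 1$. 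This appeal to the $q$-binomial theorem is the main technical step and also explains why the factor $q^{-{k\choose 2}}$ emerges in the final formula.

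Finally, I would apply Newton's interpolation formula to $z^n$ at the nodes $r_0,\ldots,r_n$,
\begin{align*}
z^n=\sum_{k=0}^n[r_0,\ldots,r_k]z^n\cdot\pi_k(z),
\end{align*}
take $\F_{x,0,-s}$ of both sides to obtain $H_n(x,s|q)=\sum_{k=0}^nG_k\,[r_0,\ldots,r_k]z^n$, expand the divided difference in its Lagrange form $[r_0,\ldots,r_k]z^n=\sum_{i=0}^k r_i^n/\prod_{j\neq i}(r_i-r_j)$, and convert the product back using the displayed identity for $r_i-r_j$. The factors $(s/(x(q-1)))^k$ cancel between $G_k$ and the converted denominators, leaving exactly~\eqref{eq:explicitbis}.
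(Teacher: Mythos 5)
Your argument is correct and, as far as I can tell, complete; note that the paper itself gives no proof of this proposition (it only says ``As in [KSZ] we can derive another double sum expression\dots We omit the proof''), so you have supplied the missing argument rather than reproduced one. All the key steps check out: the factorization $(q-1)q^ixz-s-(q-1)q^{2i}x^2=(q-1)q^ix\,(z-r_i)$ with $r_i=\frac{s}{x(q-1)}q^{-i}+xq^i$ turns \eqref{rescal} into an expansion of $P_n$ in the Newton basis $\pi_k(z)=\prod_{i<k}(z-r_i)$ with coefficient $\bigl(\tfrac{-xq(q-1)}{s}\bigr)^kq^{\binom{k}{2}}\frac{(q^{-n};q)_k}{(q;q)_k}$ times $(s/(x(q-1)))^n$; the system $\F(P_n)=\delta_{n,0}$ is genuinely triangular with nonzero diagonal (since $(q^{-n};q)_n\neq 0$), so it suffices to verify your candidate $G_k=(-1)^kq^{-\binom{k}{2}}(s/(x(q-1)))^k$, and after cancellation the relation does reduce to $\sum_{k=0}^n\frac{(q^{-n};q)_k}{(q;q)_k}q^k=(q^{1-n};q)_n=0$ for $n\geq 1$ (I would state it via the terminating form $(q^{-n}z;q)_n$ at $z=q$ rather than the infinite-product quotient, which is only a cosmetic improvement); and the Lagrange form of the divided difference together with $r_i-r_j=\frac{s}{x(q-1)}\bigl[q^{-i}-q^{-j}+x^2\frac{q-1}{s}(q^i-q^j)\bigr]$ makes the powers of $s/(x(q-1))$ cancel exactly, yielding \eqref{eq:explicitbis}. (I confirmed the $n=0,1$ cases numerically.) The only hypothesis worth making explicit is that the nodes $r_0,\dots,r_n$ must be pairwise distinct for the Lagrange form; this holds for generic $x,s,q$, and both sides are rational in the parameters, so the identity extends. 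This Newton-interpolation-at-the-spectral-points argument is precisely the mechanism the authors allude to by citing [KSZ], so your proof is very plausibly the intended one.
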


\section{Appendix}
\subsection{Some well-known facts}
In this section we recall some well-known facts about orthogonal polynomials (see \cite{aigner, wall,Vi}).
Let $p_n(x)$ be a sequence of polynomials which satisfies the three term recurrence relation
\begin{align}
p_{n+1}(x)=(x-b_n)p_n(x)-\la_n p_{n-1}(x)
\end{align}
with initial values $p_0(x)=1$ and $p_{-1}(x)=0$.

Define the coefficients $a(n,k)$ ($0\leq k\leq n$) by
\begin{align}
\sum_{k=0}^n a(n,k)p_k(x)=x^n.
\end{align}
These are characterized by the {\em Stieltjes tableau}:
\begin{equation}\label{eq:stableau}\begin{split}
a(0,k)&=\delta_{k,0},\\
a(n,0)&=b_0a(n-1,0)+\la_1 a(n-1,1),\\
a(n,k)&=a(n-1,k-1)+b_k a(n-1,k)+\la_{k+1}a(n-1,k+1).
\end{split}
\end{equation}
If $\F$ is the linear functional such that $\F(p_n(x))=\delta_{n,0}$, then 
\begin{align}
\F(x^n)=a(n,0).
\end{align}
The generating function of the moments has the continued fraction expansion
\begin{align}\label{eq:cfrac}
\sum_{n\geq 0}\F(x^n)z^n=\cfrac{1}{1-b_0 z-\cfrac{\la_1 z^2}{1-b_1z-\cfrac{\la_2z^2}{1-\cdots}}}.
\end{align}
The Hankel determinants for the moments are
\begin{align}
d(n,0)=\det(\F(z^{i+j}))_{i,j=0}^{n-1}=\prod_{i=1}^{n-1}\prod_{k=1}^i \la_k,
\end{align}
and 
\begin{align}
d(n,1)=\det(\F(z^{i+j+1}))_{i,j=0}^{n-1}=d(n,0) (-1)^n p_n(0).
\end{align}

As an example we want to give another  simple proof of  \eqref{eq:contherm2}.
 \begin{prop} The continuous $q$-Hermite polynomials $\tilde{H}_n(x,s|q)$ defined by  \eqref{eq:contherm}, i.e., 
 \begin{align}\label{eq:contH}
 \tilde{H}_{n+1}(x,s|q)=x\tilde{H}_n(x,s|q)-s[n]_q\tilde{H}_{n-1}(x,s|q),
 \end{align}
 are the moments of the measure of 
  the orthogonal polynomials $p_n(z):=P_n(z;x,-s,0)$ defined by  the recurrence
\begin{align}
p_{n+1}(z)=(z-xq^n)p_n(z)+sq^{n-1}[n]_q  p_{n-1}(z).
\end{align}
\end{prop}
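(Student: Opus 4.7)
The plan is to apply the Stieltjes tableau machinery recalled in the appendix directly to the recurrence for $p_n(z)=P_n(z;x,-s,0)$. Comparing \eqref{eq:ACR} with the generic three-term recurrence $p_{n+1}(z)=(z-b_n)p_n(z)-\lambda_n p_{n-1}(z)$, I read off
$$b_n = xq^n, \qquad \lambda_n = -s q^{n-1}[n]_q.$$
By the appendix, the moments are given by $\mathcal{F}_{x,-s,0}(z^n)=a(n,0)$, where $(a(n,k))$ is determined by the tableau \eqref{eq:stableau}. So the task reduces to producing an explicit expression for $a(n,k)$ whose $k=0$ column is $\tilde{H}_n(x,s|q)$.

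The natural ansatz, suggested by the $q$-Hermite recurrence together with the fact that at $q=1$ these are the classical Hermite moments with coefficients $\binom{n}{k}$, is
$$a(n,k) = {n\brack k}_q \tilde{H}_{n-k}(x,s|q).$$
The initial condition $a(0,k)=\delta_{k,0}$ is immediate, and the desired identification $a(n,0)=\tilde{H}_n(x,s|q)$ is built in.

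The heart of the proof is then to verify that this ansatz satisfies the tableau recurrence
$$a(n,k) = a(n-1,k-1) + xq^k a(n-1,k) - sq^k[k+1]_q\, a(n-1,k+1).$$
Substituting the ansatz, the coefficient of $x\tilde{H}_{n-k-1}$ on the right becomes ${n-1\brack k-1}_q + q^k{n-1\brack k}_q$, which collapses to ${n\brack k}_q$ by the $q$-Pascal rule. The coefficient of $-s\tilde{H}_{n-k-2}$ becomes
$$[n-k-1]_q{n-1\brack k-1}_q + q^k[k+1]_q{n-1\brack k+1}_q,$$
and a short computation — factoring out $\tfrac{[n-1]_q!}{[k-1]_q![n-k-2]_q!}$ and invoking the identity $[k]_q + q^k[n-k]_q=[n]_q$ — shows this equals $[n-k-1]_q{n\brack k}_q$. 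Combining the two pieces and applying the recurrence \eqref{eq:contH} for $\tilde{H}_{n-k}$ gives exactly ${n\brack k}_q\tilde{H}_{n-k}(x,s|q)=a(n,k)$, as required.

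The only place requiring any thought is locating the correct ansatz; once it is in hand the verification is a routine $q$-binomial manipulation. I would expect to present the proof in three or four lines, with the explicit evaluation of the $q$-binomial sum in a parenthetical remark. The conclusion $\mathcal{F}_{x,-s,0}(z^n)=a(n,0)=\tilde{H}_n(x,s|q)$ reproves \eqref{eq:contherm2} without appealing to Ismail–Stanton.
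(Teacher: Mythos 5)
Your proposal is correct and follows exactly the paper's own route: the paper also sets $b_n=q^nx$, $\lambda_{n+1}=-sq^{n}[n+1]_q$ and verifies the Stieltjes tableau \eqref{eq:stableau} with the ansatz $a(n,k)={n\brack k}\tilde H_{n-k}(x,s|q)$, merely asserting that this is ``clearly equivalent'' to \eqref{eq:contH}. The only difference is that you spell out the $q$-Pascal and $[k+1]_q{n-1\brack k+1}=[n-k-1]_q{n-1\brack k}$ manipulations that the paper leaves implicit; these check out.
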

\begin{proof}
Let $b_n=q^nx$ and $\la_{n+1}=(-s)q^{n}[n+1]_q$ for $ n\geq 0$.
It is sufficient to verify that in this case  \eqref{eq:stableau} is satisfied with
\begin{align}
a(n,k)={n\brack k}\tilde H_{n-k}(z,s|q).
\end{align}
This is clearly equivalent to  \eqref{eq:contH}.
\end{proof}
We derive immediately the  Hankel determinants 
\begin{align}
d(n,0)=(-s)^{n\choose 2} q^{n\choose 3}\prod_{j=0}^{n-1}[j]_q!,
\end{align}
and 
\begin{align}
d(n,1)=d(n,0)r(n),
\end{align}
where $r(n)=(-1)^{n} p_n(0;x,-s,0)$.

Note that the polynomials $r(n)$ satisfy 
$$
r(n)=q^{n-1}x r(n-1)+q^{n-2}s[n-1]_q r(n-2).
$$
This implies that 
\begin{align}
r(n)=q^{\frac{n(n-2)}{2}}\tilde H_{n}\left(x\sqrt{q},-s|\frac{1}{q}\right).
\end{align}
The first terms of the sequence $\tilde H_{n}(x,s|q)$ are 
\begin{gather*}
1,\; x,\; -s+x^2,\; x(-(2+q)s+x^2),\; (1+q+q^2)s^2-(3+2q+q^2)sx^2+x^4,\\
x((3+4q+4q^2+3q^3+q^4)s^2-(4+3q+2q^2+q^3)sx^2+x^4).
\end{gather*}
From their recurrence relation we see that
$$
\tilde H_{2n}(0,s|q)=(-s)^n[2n-1]_q!!\quad \text{and}\quad \tilde H_{2n+1}(0,s|q)=0.
$$
\subsection{A second proof of Proposition~\ref{prop:mathieu}}
 We now give a second proof of Proposition~\ref{prop:mathieu} using Theorem~\ref{thm:HH} and the orthogonality of the 
 continuous $q$-Hermite polynomials.
 Clearly Theorem~\ref{thm:HH} is equivalent to
\begin{align}\label{eq:keylink}
x^n=\sum_{k\equiv n\pmod{2}} c(n,k,q) s^{(n-k)/2}\tilde H_k(x,s|q).
\end{align}
To compute $c(n,k,q)$ we can take $s=1$ and let $\tilde  H_n(x|q)=\tilde H_n(x,s|q)$.
It is known (see \cite{ISV}) that  the continuous $q$-Hermite polynomials $(\tilde H_n(x|q))$ are orthogonal with respect to the linear functional $\varphi$ defined by
\begin{align}
\varphi(x^n)=\int_{-2/\sqrt{1-q}}^{2/\sqrt{1-q}}x^n v(x,q) dx,
\end{align}
where
$$
v(x,q) =\frac{\sqrt{(1-q)} (q)_\infty}{\sqrt{1-(1-q)x^2/4}4\pi}\prod_{k=0}^\infty \{1+(2-(1-q)x^2)q^k+q^{2k}\}.
$$
Since $\varphi((\tilde H_k(x|q))^2)= [k]_q!$,
it follows from \eqref{eq:keylink} that, for $k\equiv n\pmod{2}$,
\begin{align}\label{eq:computation}
c(n,k,q)=\frac{1}{ [k]_q!}\varphi(x^n\tilde H_k(x|q)).
\end{align}
Recall the well-known formula (see \cite{ISV}) 
\begin{align}\label{eq:2}
x^{2n}=\sum_{j=-n}^{n}{2n\choose n+j}T_{2j}(x/2),
\end{align}
where $T_n(\cos \theta)=\cos(n\theta)=T_{-n}(\cos \theta)$ is the $n$th Chybeshev polynomial of the  first kind.
By using the Jacobi triple product formula and the terminating $q$-binomial formula, 
we can prove  (see  \cite[p. 307]{Ismail}) that, for any integer $j$ and $a=\sqrt{1-q}$, 
\begin{align}\label{eq:ismail}
\varphi(T_{n-2j}(ax/2)\tilde H_{n}(x|q))=\frac{(-1)^{n+j}}{2a^n}
q^{n-j\choose 2} \{(q^{-n+j+1};q)_n+q^{n-j}(q^{-n+j}; q)_n\}.
\end{align}
It follows from \eqref{eq:computation}, \eqref{eq:2} and \eqref{eq:ismail} that 
\begin{align}
c(2n,2k,q)&=\frac{a^{-2n} }{[2k]_q!}\sum_{j= -n}^n {2n\choose n+j}\varphi(T_{2j}(ax/2)\tilde H_{2k}(x|q))\nonumber\\
&=\frac{(1-q)^{-(n-k)} }{(q;q)_{2k}}\sum_{j=-n}^n 
{2n\choose n+j}\frac{(-1)^{k+j}}{2}q^{k+j\choose 2}
\{(q^{-k-j+1};q)_{2k}+q^{k+j}(q^{-k-j};q)_{2k}\}\nonumber\\
\end{align}
Since  
$(q^{-k-j+1};q)_{2k}$  is zero if 
$j\not=-n,\ldots, -k$ and $j\not=k+1,\ldots, n$, and $(q^{-k-j};q)_{2k}$   is zero  if $j\not=-n,\ldots, -k-1$ or $j\not=k,\ldots, n$, 
we can split the last summation  into the following four summations:
\begin{align}
S_1&=\sum_{j=-n}^{-k}
{2n\choose n+j}\frac{(-1)^{k+j}}{2}q^{k+j\choose 2} (q^{-k-j+1};q)_{2k},\nonumber\\
S_2&=\sum_{j=k+1}^n 
{2n\choose n+j}\frac{(-1)^{k+j}}{2}q^{k+j\choose 2} (q^{-k-j+1};q)_{2k},\nonumber\\
S_3&=\sum_{j=-n}^{-k-1} 
{2n\choose n+j}\frac{(-1)^{k+j}}{2}q^{k+j\choose 2}
q^{k+j}(q^{-k-j};q)_{2k}, \nonumber\\
S_4&=\sum_{j=k}^n 
{2n\choose n+j}\frac{(-1)^{k+j}}{2}q^{k+j\choose 2}
q^{k+j}(q^{-k-j};q)_{2k}.\nonumber
\end{align}
It is readily seen,  by replacing $j$ by $-j$ in $S_1$  and $S_3$, that $S_1=S_4$ and $S_2=S_3$.
Therefore, 
\begin{align}
c(2n,2k,q)&=\frac{(1-q)^{-(n-k)} }{(q;q)_{2k}}(S_2+S_4)\nonumber\\
&=(1-q)^{-(n-k)} \sum_{j\geq 0} 
{2n\choose n+k+j}(-1)^{j}q^{j\choose 2}
\frac{[2k+2j]}{[2k+j]}{2k+j\brack j}.\label{eq:even}
\end{align}
This corresponds to \eqref{eq:lucasformula} for even indices. 
To derive the formula for odd indices we can use
 \eqref{eq:cig2} to get
\begin{align*}
c(2n+1,2k+1,q)=[2k+2]_qc(2n,2k+2,q)+c(2n,2k,q),
\end{align*}
and then apply \eqref{eq:even}. 

\subsection*{Acknowledgement}  The authors thank the two anonymous referees
 for useful comments on a previous version of this paper. The second author was  supported by 
the project PhysComb (ANR-08-Blan-0243-03).

\end{document}